\newcommand{\sSet}{\mathsf{sSet}}
\newcommand{\ssSet}{\mathsf{ssSet}}
\DeclareMathOperator{\ob}{ob}
\DeclareMathOperator{\Tor}{Tor}
\newcommand{\Ddelta}{\Delta}
\newcommand{\Qb}{\mathbb{Q}}
\newcommand{\into}{\hookrightarrow}
\newcommand{\onto}{\twoheadrightarrow}
\newcommand{\Inv}{\mathsf{Inv}^{\textit{a}}}
\DeclareMathOperator{\A}{\mathsf{A}}
\DeclareMathOperator{\B}{\mathsf{B}}
\DeclareMathOperator{\E}{\mathcal{E}}
\DeclareMathOperator{\Idx}{\mathbb{I}dx}
\DeclareMathOperator{\fPoset}{poSet^{fr,filt}_{f}}
\DeclareMathOperator{\bSet}{poSet^{filt}_f}
\DeclareMathOperator{\bcSet}{poSet^{filt}}
\DeclareMathOperator{\Fb}{\mathbb{F}}
\DeclareMathOperator{\Lex}{Lex}
\DeclareMathOperator{\Prok}{\mathsf{Pro}^{\textit{a}}_{\kappa}}
\newcommand{\Lc}{\mathcal{L}}
\newcommand{\Excat}{\mathsf{Cat}_{\textit{ex}}}
\newcommand{\Expair}{\mathsf{Cat}_{\textit{ex}}^{\textit{pair}}}
\newcommand{\Index}{\mathsf{Index}}
\newcommand{\Spaces}{\mathsf{Spaces}}
\DeclareMathOperator{\Vect}{\mathsf{Vect}}
\DeclareMathOperator{\Indk}{\mathsf{Ind}^{\textit{a}}_{\kappa}}
\DeclareMathOperator*{\colim}{\varinjlim}
\newcommand{\C}{\mathsf{C}}
\newcommand{\Cc}{\mathsf{C}} 
\newcommand{\Gr}{Gr}
\newcommand{\Ab}{Ab}
\DeclareMathOperator{\id}{id}
\DeclareMathOperator{\grp}{\times}
\DeclareMathOperator{\Set}{Set}
\DeclareMathOperator{\Pro}{\mathsf{Pro}^{\textit{a}}}
\DeclareMathOperator{\Ind}{\mathsf{Ind}^{\textit{a}}}
\DeclareMathOperator{\Fun}{Fun}
\DeclareMathOperator{\op}{op}
\DeclareMathOperator{\Aut}{Aut}
\DeclareMathOperator{\GL}{GL}
\newcommand{\Zb}{\mathbb{Z}}
\DeclareMathOperator{\D}{\mathsf{D}}
\DeclareMathOperator{\Kk}{\mathcal{K}}
\DeclareMathOperator{\Tate}{\mathsf{Tate}}
\DeclareMathOperator{\elTate}{\mathsf{Tate}^{\textit{el}}}
\begin{document}
\newtheorem{definition}{Definition}[section]
\newtheorem{theorem}[definition]{Theorem}
\newtheorem{proposition}[definition]{Proposition}
\newtheorem{corollary}[definition]{Corollary}
\newtheorem{conj}[definition]{Conjecture}
\newtheorem{lemma}[definition]{Lemma}
\newtheorem{rmk}[definition]{Remark}
\newtheorem{cl}[definition]{Claim}
\newtheorem{example}[definition]{Example}
\newtheorem{problem}[definition]{Problem}
\newtheorem{claim}[definition]{Claim}
\newtheorem{ass}[definition]{Assumption}
\newtheorem{warning}[definition]{Warning}
\newtheorem{porism}[definition]{Porism}
\newtheorem{notation}[definition]{Notation}

\title{The $A_\infty$-Structure of the Index Map}
\author{Oliver Braunling \& Michael Groechenig \& Jesse Wolfson}

\address{Freiburg Institute for Advanced Studies (FRIAS), University of Freiburg}
\email{oliver.braeunling@math.uni-freiburg.de}
\address{Department of Mathematics, Freie Universit\"{a}t Berlin}
\email{m.groechenig@fu-berlin.de}
\address{Department of Mathematics, University of California, Irvine}
\email{wolfson@uci.edu}

\begin{abstract}
    Let $F$ be a local field with residue field $k$. The classifying space of $\GL_n(F)$ comes canonically equipped with a map to the delooping of the $K$-theory space of $k$. Passing to loop spaces, such a map abstractly encodes a homotopy coherently associative map of $A_\infty$-spaces $\GL_n(F)\to K_k$. Using a generalized Waldhausen construction, we construct an explicit model built for the $A_\infty$-structure of this map, built from nested systems of lattices in $F^n$. More generally, we construct this model in the framework of Tate objects in exact categories, with finite dimensional vector spaces over local fields as a motivating example.
\end{abstract}

\thanks{O.B.\ was supported by DFG SFB/TR 45 ``Periods, moduli spaces and arithmetic of algebraic varieties'' and the Alexander von Humboldt Foundation. M.G.\ was supported by EPRSC Grant No.\ EP/G06170X/1. J.W.\ was partially supported by an NSF Graduate Research Fellowship under Grant No.\ DGE-0824162, by an NSF Research Training Group in the Mathematical Sciences under Grant No.\ DMS-0636646, and by an NSF Post-doctoral Research Fellowship under Grant No.\ DMS-1400349. He was a guest of K. Saito at IPMU while this paper was being completed. Our research was supported in part by NSF Grant No.\ DMS-1303100 and EPSRC Mathematics Platform grant EP/I019111/1.}

\subjclass[2010]{19D55 (Primary), 19K56 (Secondary)}

\maketitle

\section{Introduction}
Let $F$ be a local field with residue field $k$, e.g. $F=\Qb_p$ and $k=\Fb_p$, or $F=\Fb_p((t))$ and $k=\Fb_p$. Let $O\subset F$ be the ring of integers, $\mathfrak{m}\subset O$ the maximal ideal, and denote by $\Tor_{\mathfrak{m},f}(O)$ the category of finitely generated torsion $O$-modules. Let $S_{\bullet}$ denote Waldhausen's $S$-construction.  For any finite dimensional vector space $V$ over $F$, the authors constructed in \cite{Braunling2017} an ``index'' map, i.e. a map of spaces
\begin{equation*}
    B\GL(V)\to^{\Index} |S_\bullet(\Tor_{\mathfrak{m},f}(O))^\times|\to^\simeq BK_k
\end{equation*}
from the classifying space of $\GL(V)$, a group which we shall always tacitly view as equipped with the discrete topology, to Waldhausen's delooping of the $K$-theory space of $k$.\footnote{For $F$ such that $k$ is not a subfield of $F$, the existence of the map $|S_\bullet(\Tor_{\mathfrak{m},f}(O))^\times|\to BK_k$ relies on devissage.}

To sketch the bigger picture: for a local field $F$ with residue field $k$, Quillen's localization sequence gives a boundary
map%
\begin{equation}
\Omega K_{F}\longrightarrow K_{k}\text{,}\label{j_1}%
\end{equation}
where $K_{F}$ is the algebraic $K$-theory of the category of
finite-dimensional $F$-vector spaces. On the other hand, by a general procedure a finite-dimensional $F$-vector space $V$ can be written as an ind-pro limit of finite-dimensional $k$-vector spaces.  The \textquotedblleft index map\textquotedblright\ has the property that (the classifying space of) the group of automorphisms of $V$ as such an ind-pro limit can also be mapped to the $K$-theory $K_{k}$ of the residue field.
Restricting to those automorphisms which genuinely come from $F$-vector space
automorphisms, \cite{Braunling2017} shows that, suitably restricted to a common source, this
map agrees with the one coming from Equation \ref{j_1}.

Let $\Vect_f$ denote the category of finite dimensional vector spaces. The index map encodes, after passing to loop spaces, a homotopy coherently associative map of loop spaces
\begin{equation*}
    \GL(V)\to^\simeq \Omega B\GL(V)\to\Omega|S_\bullet(\Vect_f(k))^\times|\to^\simeq K_k,
\end{equation*}
which in turn amounts to a coherent collection of homotopies
\begin{equation}\label{htpy}
    \Index(g_1)+\Index(g_2)\simeq\Index(g_1g_2).
\end{equation}
In applications, e.g. \cite{BGW:14}, one would like to be able to manipulate these homotopies in detail. The goal of the present paper is to construct a map of reduced Segal spaces
\begin{equation*}
    B_\bullet\GL(V)\to K_{S_\bullet(\Vect_f(k))}
\end{equation*}
whose geometric realization is the index map.\footnote{Here $B_\bullet G$ denotes the bar construction (or nerve) of the group $G$. This is a reduced Segal space with $|B_\bullet G|\simeq BG$.} Our main tool for this construction is a generalized Waldhausen construction, developed in Section \ref{sub:waldhausen}.  Our model for this construction follows from an analogy with index theory. Given an invertible element $f\in F^\times$ such that $f\cdot O\subset O$, the linear map $O\to^f O$ has finite dimensional cokernel, and the assignment $f\mapsto O/f\cdot O$ extends to a map of spaces
\begin{equation*}
    \GL_1(F)\to K_k.
\end{equation*}
To extend this to a full map of simplicial spaces (and to handle the case where $k$ is not a subfield of $F$, or when $\dim V>1$), we employ the framework of Tate objects in an exact category $\C$, as developed in \cite{BGW:2016MMJ}. Tate objects provide a setting for working with ``locally compact'' objects modeled on $\C$.  For example, a finite dimensional vector space over $\Qb_p$ is canonically a locally compact topological abelian group (with the $p$-adic topology), and also an elementary Tate object in the category $\Ab_{p,f}$ of finite abelian $p$-groups. A key advantage of working with Tate objects is that the category $\Tate(\C)$ of Tate objects in $\C$ is itself an exact category, and can be treated on the same footing as $\C$ (without requiring any topological constructions).

To define Tate objects, we rely on the notion of ``admissible Ind-objects''. Recall that an admissible Ind-object in $\C$ is a left exact presheaf $\widehat{X}$ of abelian groups on $\C$ such that $\widehat{X}$ can be written as the colimit of a filtering diagram $X\colon I\to \C$ in which all maps $X_i\to X_j$ are admissible monics. The category of admissible Ind-objects $\Ind(\C)$ is a full sub-category of the category $\Lex(\C)$ of all left exact presheaves of abelian groups, and it inherits an exact structure from $\Lex(\C)$ (cf. \cite[Section 3]{BGW:2016MMJ}). We define the category of admissible Pro-objects by $\Pro(\C):=\Ind(\C^{\op})^{\op}$. Since $\Pro(\C)$ is an exact category, we can consider the exact category $\Ind(\Pro(\C))$, and we define $\elTate(\C)$ to be the smallest full sub-category of $\Ind(\Pro(\C))$ which contains $\Ind(\C)$ and $\Pro(\C)$ and is closed under extensions.

The key feature of Tate objects is that they have ``lattices'', i.e. admissible sub-objects $L\subset V$ such that $L\in\Pro(\C)$ and $V/L\in\Ind(\C)$. For example, the ring of integers $\Zb_p\subset\Qb_p$ is canonically an object in $\Pro(\Ab_{p,f})$, and $\Qb_p/\Zb_p$ is a discrete abelian $p$-group, or equivalently, an object of $\Ind(\Ab_{p,f})$. In the above analogy with index theory, any Tate object $V$ can play the role of $F$, any lattice $L\subset V$ the role of $O$, and any automorphism $g\in\GL(V)$ the role of $f\in F^\times$. Following this analogy, coherent homotopies as in \eqref{htpy} should correspond to choices of nested systems of lattices in $V$. In the present paper, we make this precise by using a generalized Waldhausen construction to exhibit, for a Tate object $V$ in an idempotent complete exact category $\C$, a map of reduced Segal objects
\begin{equation}\label{segalmapInIntro}
    B_\bullet\GL(V)\to K_{S_\bullet(\C)}
\end{equation}
whose geometric realization is the index map. The present construction will be independent of our approach in \cite{Braunling2017}. In Subsection \ref{sec:compare}, we exhibit a homotopy between the geometric realization of \eqref{segalmapInIntro} and the ``index map'' of \cite{Braunling2017}.
\\
\paragraph*{{\bf Acknowledgements}}
We thank the editor Chuck Weibel and the anonymous referee for numerous helpful suggestions which have greatly improved the paper.

\section{Preliminaries}\label{sec:prelim}
Throughout this paper we work in the $\infty$-categories of spaces and spectra.  We take \cite{Lurie:bh} and \cite{Lurie:ha} as standard references for $\infty$-categories.
\\
\paragraph{\bf{Exact Categories and Tate Objects}}\label{par:et}
We follow the notation of \cite{Braunling2017} throughout. We consider \emph{exact categories} $\Cc$, i.e. additive categories equipped with a collection of distinguished kernel-cokernel pairs
\begin{equation*}
    X\into Y\onto Z
\end{equation*}
called \emph{exact sequences} which satisfy axioms modeled on the behavior of exact sequences of abelian groups or of projective modules. See B\"uhler \cite{MR2606234} for an excellent exposition. An exact category $\C$ is \emph{idempotent complete} if every idempotent splits, i.e. if for all $p\colon X\to X$ in $\C$ with $p^2=p$, there exists an isomorphism $X\cong Y\oplus Z$ which takes $p$ to $1_Y\oplus 0$. Fixing language, we refer to maps which arise as kernels, respecively cokernels, of exact sequences as \emph{admissible monics}, respectively \emph{admissible epics}.

Given an exact category $\Cc$, there are associated exact categories $\Ind(\Cc)$ and $\Pro(\Cc)$ of \emph{admissible Ind-objects} and \emph{admissible Pro-objects} and also exact categories $\elTate(\Cc)$ and $\Tate(\Cc)$ of \emph{elementary Tate objects} and \emph{Tate objects} in $\Cc$. We quickly recall the definitions here, and refer the reader to \cite{BGW:2016MMJ} for full details.

Denote by $\Lex(\C)$ the abelian category of left exact presheaves of abelian groups on $\C$. The Yoneda embedding allows us to view $\C$ as a fully exact sub-category of $\Lex(\C)$ which is closed under extensions (cf. e.g. \cite[Appendix A]{Kel:90}).
\begin{definition}
    Let $\C$ be an exact category. An \emph{admissible Ind-object} in $\C$ is an object $\widehat{X}\in\Lex(\C)$ such that $\widehat{X}$ is the colimit (in $\Lex(\C)$) of a filtering diagram $X\colon I\to \C$ in which all maps $X_i\to X_j$ are admissible monics in $\C$. Define the category of admissible Ind-objects $\Ind(\C)$ as a full sub-category of $\Lex(\C)$.  Define the category of \emph{admissible Pro-objects} $\Pro(\C)$ by $\Pro(\C):=\Ind(\C^{\op})^{\op}$.
\end{definition}
Following Keller \cite[Appendix A]{Kel:90}, we show in \cite[Theorem 3.7]{BGW:2016MMJ} that $\Ind(\C)$ is closed under extensions in $\Lex(\C)$, and thus has a canonical structure as an exact category.

\begin{rmk}
    Unpacking the definitions, one can also realize $\Pro(\C)$ as a localization of the category $\Inv(\C)$ of co-filtering systems of admissible epimorphisms, where one localizes at all morphisms of diagrams which are invertible on a cofinal subdiagram. Equivalently, one localizes at all morphisms which become invertible under the evaluation map $\Inv(\C)\to\Lex(\C^{\op})^{\op}$.
\end{rmk}

\begin{definition}
    Let $\C$ be an exact category. Define the category of \emph{elementary Tate objects} $\elTate(\C)$ to be the smallest full sub-category of $\Ind(\Pro(\C))$ which contains $\Ind(\C)$ and $\Pro(\C)$ and which is closed under extensions. Define the category of \emph{Tate objects} $\Tate(\C)$ to be the idempotent completion of $\elTate(\C)$.
\end{definition}
By \cite[Theorem 5.6]{BGW:2016MMJ}, the category of elementary Tate objects is well-defined, and thus inherits a canonical exact structure from $\Ind(\Pro(\C))$.

\begin{example}
    Let $\Ab_{p,f}$ be the category of finitely generated abelian $p$-groups. There exists an exact functor
    \begin{equation*}
        \Vect_{f}(\Qb_p)\to \elTate(\Ab_{p,f})
    \end{equation*}
    from the category of finite dimensional vector spaces over $\Qb_p$ to the category of elementary Tate objects in $\Ab_{p,f}$.
\end{example}

For the present, we will need the following.
\begin{definition}\label{def:sato}
    Let $V$ be an elementary Tate object in $\C$.
    \begin{enumerate}
        \item A \emph{lattice} $L\hookrightarrow V$ is an admissible sub-object, with $L\in\Prok(\C)$ and the cokernel $V/L\in\Indk(\C)$.
        \item The \emph{Sato Grassmannian} $\Gr(V)$ is the partially ordered set of lattices in $V$, where $L_0\leq L_1$ if there exists a commuting triangle of admissible monics
            \begin{equation*}
                \xymatrix{
                L_0 \ar@{^{(}->}[r] \ar@{^{(}->}[rd] & L_1 \ar@{^{(}->}[d] \\
                & V
                }
            \end{equation*}
    \end{enumerate}
\end{definition}

Lattices and the Sato Grassmannian play a key role in our study of Tate objects. We view Assertion (c) in the theorem below as the main result of \cite{BGW:2016MMJ}.
\begin{theorem}\label{thm:sato_filtered}(\cite[Prop. 6.6, Thm. 6.7]{BGW:2016MMJ})
    Let $\C$ be an exact category.
    \begin{enumerate}
        \item[(a)] Every elementary Tate object in $\C$ has a lattice.
        \item[(b)] The quotient of a lattice by a sub-lattice is an object of $\C$.
        \item[(c)] If $\C$ is idempotent complete, and $L_0\hookrightarrow V$ and $L_1\hookrightarrow V$ are two lattices in an elementary Tate object $V$, then there exists a lattice $N\hookrightarrow V$ with $L_0,L_1\le N$ in $\Gr(V)$. Similarly, $L_0$ and $L_1$ have a common sub-lattice $M \le L_0,L_1$.
    \end{enumerate}
\end{theorem}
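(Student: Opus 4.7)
For part (a), the strategy is induction on the iterated-extension depth of $V \in \elTate(\C)$. The base cases are immediate: $V \in \Ind(\C)$ admits $0$ as a lattice, while $V \in \Pro(\C)$ admits $V$ itself. For the inductive step, presenting $V$ as an extension $V' \hookrightarrow V \twoheadrightarrow V''$ with lattices $L' \subset V'$ and $L'' \subset V''$ supplied by induction, I would pull back $L''$ along $V \twoheadrightarrow V''$ to obtain $W \hookrightarrow V$ with $V/W \cong V''/L'' \in \Ind(\C)$. The problem then reduces to finding a lattice in $W$, which fits in the extension $V' \hookrightarrow W \twoheadrightarrow L''$; iterating the construction with careful tracking of which pieces are Pro and which are Ind terminates in the base cases.

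For part (b), $L'/L$ is simultaneously an admissible quotient of the pro-object $L'$, hence in $\Pro(\C)$, and the kernel of the admissible epic $V/L \twoheadrightarrow V/L'$, hence an admissible sub-object of the ind-object $V/L$, so in $\Ind(\C)$. The conclusion $L'/L \in \C$ then follows from the identification $\Ind(\C) \cap \Pro(\C) = \C$ inside $\Ind(\Pro(\C))$.

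For part (c), which is the main content, I would construct the common overlattice as $N := \image(L_0 \oplus L_1 \to V)$ and the common sub-lattice as $M := \ker(L_0 \oplus L_1 \to V)$, where the map is induced by the two inclusions. Once these exist as admissible sub-objects of $V$, the lattice conditions follow formally: $N$ is an admissible quotient of the pro-object $L_0 \oplus L_1$, and $V/N$ is an admissible quotient of the ind-object $V/L_0$; dually, $M$ is an admissible sub-object of $L_0 \oplus L_1$, and $V/M$ sits in an extension of $V/L_0 \in \Ind(\C)$ by $L_0/M$, the latter identifying with the admissible image of $L_0$ inside $V/L_1 \in \Ind(\C)$, placing $V/M$ in $\Ind(\C)$.

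The main obstacle, and the reason the idempotent completeness hypothesis enters precisely in (c), is ensuring that the image and kernel of the map $L_0 \oplus L_1 \to V$ exist as admissible sub-objects of $V$. In a general exact category neither admissible image factorizations nor pullbacks along admissible monics are automatic; idempotent completeness, combined with the specific Pro-Ind structure of Tate objects developed in \cite{BGW:2016MMJ}, is what guarantees these constructions stay within the exact structure on $\elTate(\C)$ and yields the admissible sub-objects needed above.
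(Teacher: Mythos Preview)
The paper does not contain a proof of this theorem: it is stated as a result imported from \cite[Prop.~6.6, Thm.~6.7]{BGW:2016MMJ}, with no argument given in the present text. There is therefore nothing here to compare your proposal against.

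That said, a few comments on your sketch as it stands. Your outline for (a) is reasonable in spirit, though the inductive step as written is incomplete: after pulling back $L''$ to obtain $W$, you say ``iterating the construction \dots\ terminates in the base cases,'' but $W$ is an extension of a Pro-object by an elementary Tate object, not itself obviously of smaller extension depth, so you would need to say more about why the recursion terminates. For (b), the assertion that $L'/L \in \Pro(\C)$ because it is a quotient of $L'$ presupposes that $\Pro(\C)$ is closed under admissible quotients inside $\elTate(\C)$, and dually for the Ind side; these closure properties are not automatic and are part of what is established in \cite{BGW:2016MMJ}. For (c), your candidate $M := \ker(L_0 \oplus L_1 \to V)$ is a subobject of $L_0 \oplus L_1$, not of $V$; you presumably mean the fibre product $L_0 \times_V L_1$ mapped into $V$ via either projection. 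More substantively, you correctly identify that the existence of the admissible image and the admissibility of the relevant monics is the crux, and that idempotent completeness enters here, but you defer entirely to \cite{BGW:2016MMJ} for why this works. Since that is exactly the content of the cited result, your proposal for (c) is really a restatement of what needs to be shown rather than a proof.
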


\paragraph{\bf{Algebraic K-Theory}}\label{par:k}
Following Quillen \cite{MR0338129}, one associates to every exact category $\Cc$ its $K$-theory space $K_{\C}$. The space $K_{\C}$ is an infinite loop space which serves as a universal target for \emph{additive} invariants of $\Cc$. In \cite{MR0802796}, Waldhausen gave an alternate construction of $K_{\C}$, and proved his fundamental ``Additivity Theorem''. Waldhausen's treatment of algebraic $K$-theory hinges on two simplicial exact categories, denoted by $S_\bullet(\C)$, and $S^r_{\bullet}(f)$ respectively, where $\C$ is an exact category and $f\colon \C \to \D$ is an exact functor. The simplicial object $S_\bullet(\C)$ associates to every finite non-empty totally ordered set $[k]$ the exact category $S_k(\C)$, which consists of functors $[k] \to \C$, sending every arrow in $[k]$ to an admissible monic. Likewise, the simplicial object $S_\bullet^r(f)$ associates to $[k]$ the exact category $S_k(\C)$ consisting of functors $[k]\to\D$, sending every arrow in $[k]$ to an admissible monic in $\D$ with cokernel in $\C$. Given a category $\Cc$, denote by $\Cc^\times$ the groupoid of all isomorphism in $\Cc$. With this notation, Waldhausen's definition can be given as
\begin{equation*}
    K_{\C}:=\Omega|S_\bullet(\C)^\times|.
\end{equation*}
See \cite[Section 2]{Braunling2017} for a discussion of Waldhausen's approach to $K$-theory tailored to the present setting. As discussed there, the fundamental property of algebraic $K$-theory is the following ``Additivity Theorem''. The results of this paper and \cite{Braunling2017} can be seen as consequences of the Additivity Theorem combined with Theorem \ref{thm:sato_filtered}.
\begin{theorem}[Waldhausen's Additivity Theorem]\label{thm:wald_add1}(\cite[Theorem 1.4.2, Proposition 1.3.2(4)]{MR0802796})
    Let $F_1\hookrightarrow F_2\twoheadrightarrow F_3$ be an exact sequence of functors $\C_1\to\C_2$. Then the map
    \begin{align*}
        |S_\bullet F_2|\colon |S_\bullet(\C_1)^\times|&\to|S_\bullet(\C_2)^\times|\intertext{is naturally homotopic to}
        |S_\bullet F_1\oplus S_\bullet F_3|\colon |S_\bullet(\C_1)^\times|&\to|S_\bullet(\C_2)^\times|.
    \end{align*}
\end{theorem}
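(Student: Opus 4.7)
The plan is to recast the additivity theorem as a statement about the exact category of short exact sequences, and then to prove that statement via a bi-simplicial fibration argument. Let $\mathcal{E}(\C_2)$ denote the exact category whose objects are admissible short exact sequences $A\hookrightarrow B\twoheadrightarrow C$ in $\C_2$, equipped with the pointwise exact structure, and let $s,m,q\colon \mathcal{E}(\C_2)\to \C_2$ be the three natural exact functors extracting the sub-object, middle term, and quotient respectively. An exact sequence of functors $F_1\hookrightarrow F_2\twoheadrightarrow F_3$ from $\C_1$ to $\C_2$ is precisely the data of a single exact functor $\Phi\colon \C_1\to \mathcal{E}(\C_2)$ with $s\Phi=F_1$, $m\Phi=F_2$, $q\Phi=F_3$. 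So it suffices to produce a natural homotopy on $|S_\bullet\mathcal{E}(\C_2)^\times|$ between $|S_\bullet m|$ and $|S_\bullet(s\oplus q)|$, and then precompose with $|S_\bullet\Phi|$.

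The heart of the proof is the assertion that
$$(s,q)\colon |S_\bullet\mathcal{E}(\C_2)^\times|\longrightarrow |S_\bullet\C_2^\times|\times|S_\bullet\C_2^\times|$$
is a homotopy equivalence. Granting this, a section (up to homotopy) is given by the split-extension embedding $\C_2\times\C_2\hookrightarrow\mathcal{E}(\C_2)$, $(A,C)\mapsto (A\hookrightarrow A\oplus C\twoheadrightarrow C)$; under the induced equivalence $|S_\bullet m|$ corresponds to the addition map on the $E_\infty$-space $|S_\bullet\C_2^\times|$, which by construction agrees with $|S_\bullet(s\oplus q)|$. To prove the equivalence, I would exhibit a homotopy fibre sequence
$$|S_\bullet\C_2^\times|\longrightarrow |S_\bullet\mathcal{E}(\C_2)^\times|\longrightarrow |S_\bullet\C_2^\times|$$
in which the right-hand map is $s$ and the fibre is identified with $|S_\bullet\C_2^\times|$ via $q$: over a fixed sub-flag $L_0\hookrightarrow\cdots\hookrightarrow L_n$ one parametrizes compatible extensions by the resulting quotient flag. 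The split-extension section above trivializes this fibration, yielding the product decomposition.

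The technical obstacle is verifying that the bi-simplicial forgetful map from flags of extensions to flags of sub-objects genuinely has the claimed homotopy fibres, i.e.\ that this map is a ``homotopy fibration'' in the sense appropriate to the realization lemma for bi-simplicial spaces. After unraveling, this reduces to showing that, given a fixed sub-flag and a fixed quotient-flag, the space of admissible extensions joining them is connected through a contractible moduli of choices; this is the content of Waldhausen's \cite[Proposition 1.3.2(4)]{MR0802796} and is the main ingredient to be invoked. Once that is in hand, the argument is formal: one applies the realization lemma to pass from the levelwise fibre sequence of groupoids to a fibre sequence of geometric realizations, trivializes via the split-extension section, and then reads off the desired natural homotopy by applying the resulting equivalence to $\Phi$.
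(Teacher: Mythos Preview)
The paper does not give its own proof of this theorem: it is stated as a citation to Waldhausen's original paper \cite[Theorem 1.4.2, Proposition 1.3.2(4)]{MR0802796} and used as a black box throughout. So there is no proof in the paper to compare your proposal against.

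That said, your outline follows the standard reduction that Waldhausen himself uses: one passes to the universal case of the exact category $\mathcal{E}(\C_2)$ of short exact sequences and shows that $(s,q)\colon |S_\bullet\mathcal{E}(\C_2)^\times|\to |S_\bullet\C_2^\times|\times|S_\bullet\C_2^\times|$ is an equivalence; the desired homotopy then follows by precomposing with $\Phi$. Your sketch of this core step is somewhat imprecise, however. You propose a homotopy fibre sequence with base map $s$ and fibre identified via $q$, but the description ``over a fixed sub-flag one parametrizes compatible extensions by the resulting quotient flag'' glosses over the actual content: the levelwise maps $S_n\mathcal{E}(\C_2)^\times\to S_n\C_2^\times$ are not fibrations of groupoids in any naive sense, and one cannot simply read off the fibres. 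Waldhausen's proof of Proposition 1.3.2 instead proceeds by a direct combinatorial ``swallowing'' argument on the bisimplicial set, showing that certain auxiliary simplicial objects are contractible. Your invocation of ``the realization lemma'' and ``a contractible moduli of choices'' gestures at the right shape but does not name the mechanism that makes it work; as written, the proposal defers the entire difficulty to the cited proposition without indicating how one would establish it.
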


Several equivalent reformulations exist. We will need the following.
\begin{definition}[Waldhausen]
    Let $\D$ be an exact category, and let $\C_1$ and $\C_2$ be full sub-categories of $\D$ which are closed under extensions.
    Define $\E(\C_1,\D,\C_2)$ to be the full sub-category of $\E\D$ consisting of the exact sequences $X_1\hookrightarrow
    Y\twoheadrightarrow X_2$ with $X_i\in\C_i$.
\end{definition}
Note that, because $\C_1$ and $\C_2$ are closed under extensions in $\D$, $\E(\C_1,\D,\C_2)$ is closed under extensions in
$\E\D$; in particular, it is an exact category.

\begin{theorem}\label{thm:additivity}
    Let $\A \to^i \B\to^p \C$ be a composable pair of exact functors such that $i$ is fully faithful and induces an equivalence with the full sub-category of $\B$ annihilated by $p$. Moreover, assume that $p$ has a left adjoint
    $$s\colon \C \to \B,$$
    such that $ps\cong 1_{\C}$ and such that, for every object $Y\in\B$, the co-unit $sp(Y)\to Y$ is an admissible monic with cokernel in $\A$. Then, the map
    $$i \times s\colon K_{\A} \times K_{\C} \xrightarrow{\simeq} K_{\B},$$
    is an equivalence of spaces.
\end{theorem}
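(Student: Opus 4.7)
The plan is to exhibit an explicit candidate inverse $\B \to \A \times \C$ at the level of $K$-theory, and to deduce it is indeed an inverse via Waldhausen's Additivity Theorem (Theorem \ref{thm:wald_add1}) applied to the counit of the adjunction.

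First, I would package the hypotheses into an exact functor $q : \B \to \A$. For each $Y \in \B$, the counit $\epsilon_Y : sp(Y) \to Y$ is by assumption an admissible monic with cokernel lying in $\A$ (i.e., in the essential image of $i$); exactness of $s$ and $p$, together with a $3 \times 3$-lemma argument in the exact category $\B$, shows that the resulting assignment $q$ is itself an exact functor. That the cokernel genuinely lies in $\ker(p) \cong \A$ can be checked directly: applying $p$ to the sequence $sp(Y) \hookrightarrow Y \twoheadrightarrow iq(Y)$ produces an exact sequence in which $p\epsilon_Y$ is an isomorphism (by the triangle identity together with $ps \cong 1_\C$), so $p(iq(Y)) = 0$.

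The central step is to apply the Additivity Theorem to the exact sequence of exact endofunctors of $\B$
\begin{equation*}
    sp \hookrightarrow 1_\B \twoheadrightarrow iq,
\end{equation*}
which yields a natural homotopy $1_{K_\B} \simeq s_* p_* + i_* q_*$ of self-maps of $K_\B$. Unpacking this identity, the composite $(i \times s) \circ (q_*, p_*) : K_\B \to K_\A \times K_\C \to K_\B$ is homotopic to the identity on $K_\B$. For the opposite composition, I would simply compute on objects: one has $p(iA \oplus sC) \cong C$ since $pi = 0$ and $ps \cong 1_\C$; and the counit at $iA \oplus sC$ splits as a direct sum, with the counit at $iA$ being $0 \to iA$ (because $sp(iA) = s(0) = 0$) and the counit at $sC$ being an isomorphism. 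Hence $q(iA \oplus sC) \cong A$, and therefore $(q_*, p_*) \circ (i \times s) \simeq 1_{K_\A \times K_\C}$.

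The main technical subtlety is verifying that $q$ is a well-defined \emph{exact} functor — which requires the $3 \times 3$-lemma for exact categories applied to the natural transformation $sp \to 1_\B$ — so that the short exact sequence $sp \hookrightarrow 1_\B \twoheadrightarrow iq$ qualifies as a valid input for Additivity. Once this is in place, the remainder is a direct verification using the triangle identities of the adjunction.
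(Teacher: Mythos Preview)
Your proposal is correct and follows essentially the same approach as the paper: both construct the exact sequence of endofunctors $sp \hookrightarrow 1_\B \twoheadrightarrow iq$ from the counit (the paper calls your $q$ by the name $r$), apply Waldhausen Additivity to it, and then use the relations $ps \cong 1_\C$, $qi \cong 1_\A$, $pi = 0$, $qs = 0$ to verify the other composite. The only cosmetic difference is that the paper passes to homotopy groups and invokes the Whitehead lemma, whereas you exhibit the homotopy inverse directly at the space level; your treatment is if anything slightly more explicit about why $q$ is exact and why the reverse composite is the identity.
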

While this theorem is, without doubt, well-known, we have a chosen a less conventional statement which is convenient for our applications. Therefore, we now give a proof.
\begin{proof}
    We have a well-defined map of spaces $i \times s\colon K_{\A} \times K_{\C} \rightarrow  K_{\B}$.  By the Whitehead lemma it suffices to show that it establishes an equivalence on all homotopy groups.

    The admissible monic of functors
    $$sp\hookrightarrow 1_{\B}\colon \B \to \B,$$ given by the co-unit of the adjunction $(p,s)$, extends to a short exact sequence
    $$sp\hookrightarrow 1_{\B} \twoheadrightarrow f\colon \B \to \B.$$
    By construction, $pf = 0$, therefore $f$ can be expressed as $ir$, where $r\colon \B \to \A$ is an exact functor. By the Additivity Theorem (Theorem \ref{thm:wald_add1}), we have
    $$\pi_i(K(ir)\oplus K(sp)) = \pi_i(K(1_{\B})).$$
    Moreover, the relations $ps = 1_{\C}$ and $ri = 1_{\A}$ imply that we also have
    $$\pi_i(K_{\B}) \cong \pi_i(K_{\A}) \times\pi_i(K_{\C}).$$
    The Whitehead lemma concludes the proof.
\end{proof}

\paragraph{\bf{Segal Objects}}
In \cite{MR0353298}, Segal introduced a definition which, in the hands of May and Thomason \cite{MR0508885}, \cite{MR0534053}, Rezk \cite{MR1804411}, Lurie \cite{Lurie:ha} and many others, has become fundamental to the study of $A_\infty$-objects (a.k.a. $E_1$-objects or homotopy coherent associative monoids) in a homotopical setting.
\begin{definition}
    Let ${\C}$ be an $\infty$-category with finite products. For each $n$, consider the collection of maps
    \begin{equation*}
      \{[1]=\{0<1\}\to^\cong\{i-1<i\}\subset[n]\}_{i=1}^n
    \end{equation*}
    A \emph{Segal object} in ${\C}$ is a simplicial object $X_\bullet\in\Fun(\Ddelta^{\op},{\C})$ such that, for $n\ge 2$, the map
    \begin{equation*}
        X_n\to \underbrace{X_1\times_{X_0}\cdots\times_{X_0} X_1}_n
    \end{equation*}
    induced by the above collection is an equivalence. A \emph{reduced Segal object} $X_\bullet$ is a Segal object with $X_0\simeq\ast$. Segal objects form a full sub-category of simplicial objects in $\C$.
\end{definition}

For a basic example, the bar construction associates to a group $G$ a simplicial space $B_\bullet G$ with $n$-simplices the discrete space $G^n$.  A standard exercise shows that $B_\bullet G$ is a reduced Segal space, and the Segal structure is just a rewriting of the group law.  For a richer example, given an exact category $\C$, we can consider the simplicial exact category $S_\bullet\C$ given by Waldhausen's $S_\bullet$-construction. Waldhausen's Additivity Theorem (Theorem \ref{thm:wald_add1}) implies that the simplicial space $K_{S_\bullet \C}$ obtained by taking the $K$-theory space of each category of $n$-simplices is a reduced Segal object in the $\infty$-category of spaces. The Segal space structure encodes the homotopy coherent addition of elements in $K_{\C}$.
\\
\paragraph{\bf{The Index Map}}
We now recall the index map. For $n\ge 0$, denote by $[n]$ the partially ordered set $\{0<\ldots<n\}$ viewed as a category, and, for a category $\C$, denote by $\Fun([n],\C)$ the category of functors from $[n]$ to $\C$.

\begin{definition}
    Let $\C$ be an exact category. Define the \emph{Sato complex} $\Gr_\bullet^\le(\C)$ to be the simplicial diagram of exact
    categories with
    \begin{enumerate}
        \item $n$-simplices $\Gr_n^\le(\C)$ given by the full sub-category of $\Fun([n+1],\elTate(\C))$ consisting of sequences of admissible monics
            \begin{equation*}
                L_0\hookrightarrow\cdots\hookrightarrow L_n\hookrightarrow V
            \end{equation*}
            where, for all $i$, $L_i\hookrightarrow V$ is the inclusion of a lattice,
        \item face maps are given by the functors
            \begin{equation*}
                d_i(L_0\hookrightarrow\cdots\hookrightarrow L_n\hookrightarrow V):=(L_0\hookrightarrow\cdots\hookrightarrow
                L_{i-1}\hookrightarrow L_{i+1}\hookrightarrow\cdots\hookrightarrow L_n\hookrightarrow V),
            \end{equation*}
        \item and degeneracy maps are given by the functors
            \begin{equation*}
                s_i(L_0\hookrightarrow\cdots\hookrightarrow L_n\hookrightarrow V):=(L_0\hookrightarrow\cdots\hookrightarrow
                L_i\hookrightarrow L_i\hookrightarrow\cdots\hookrightarrow L_n\hookrightarrow V).
            \end{equation*}
    \end{enumerate}
\end{definition}

The simplicial object $\Gr^{\leq}_{\bullet}(\C)$ allows us to introduce the index map.

\begin{definition}
    Let $\C$ be an exact category. The \emph{categorical index map} is the span of simplicial maps
    \begin{equation}\label{catindex}
        \elTate(\C)\longleftarrow\Gr_\bullet^\le(\C)\to^\Index S_\bullet(\C),
    \end{equation}
    where the left-facing arrow is given on $n$-simplices by the assignment
    \begin{equation*}
        (L_0\hookrightarrow\cdots\hookrightarrow L_n\hookrightarrow V)\mapsto V,
    \end{equation*}
    and $\Index$ is given on $n$-simplices by the assignment
    \begin{equation*}
        (L_0\hookrightarrow\cdots\hookrightarrow L_n\hookrightarrow V)\mapsto(L_1/L_0\hookrightarrow\cdots\hookrightarrow
        L_n/L_0).
    \end{equation*}
\end{definition}

Recall the following.
\begin{proposition}\label{prop:gr}\cite[Proposition 3.3]{Braunling2017}\mbox{}
    Let $\C$ be an idempotent complete exact category. Then the map from $\Gr_\bullet^\le(\C)\to\elTate(\C)$ of \eqref{catindex} induces an equivalence
    \begin{equation}\label{satoequiv}
        |\Gr_\bullet^\le(\C)^\times|\to^\simeq|\elTate(\C)^\times|.
    \end{equation}
\end{proposition}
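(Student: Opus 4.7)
The plan is to analyze the map one connected component of the target at a time. Since $\elTate(\C)^\times$ is a groupoid, its classifying space decomposes as $|\elTate(\C)^\times| \simeq \coprod_{[V]} B\Aut(V)$, indexed by isomorphism classes of elementary Tate objects in $\C$. I would show that the preimage in $|\Gr_\bullet^\le(\C)^\times|$ of each component $B\Aut(V)$ is itself canonically equivalent to $B\Aut(V)$ and that the restriction of \eqref{satoequiv} is the identity up to homotopy; the equivalence then follows component-wise.

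For fixed $V$, the preimage of $B\Aut(V)$ is the realization of the simplicial subgroupoid of $\Gr_\bullet^\le(\C)^\times$ whose $n$-simplices are chains $L_0 \hookrightarrow \cdots \hookrightarrow L_n \hookrightarrow V'$ with $V' \cong V$. An isomorphism of two such chains that covers the identity $1_V$ of the ambient object must itself be the identity, since the inclusion of a subobject factors uniquely through the chain. Consequently this subgroupoid is equivalent to the action groupoid attached to the natural action of $\Aut(V)$ on the nerve of the poset $\Gr(V)$ of lattices from Definition \ref{def:sato}, and its realization is the Borel construction $|N_\bullet \Gr(V)|_{h\Aut(V)}$.

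Next I would invoke Theorem \ref{thm:sato_filtered}: part (a) supplies the nonemptiness of $\Gr(V)$, and part (c), which is precisely where the idempotent-completeness hypothesis on $\C$ enters, says that any two lattices in $V$ admit a common upper bound. These two facts make $\Gr(V)$ a filtered poset, so its nerve is contractible. Hence $|N_\bullet \Gr(V)|_{h\Aut(V)} \simeq B\Aut(V)$, and unwinding the identifications shows that the induced map is the canonical equivalence of $B\Aut(V)$ with itself. The main obstacle is the bisimplicial bookkeeping in the second paragraph: the forgetful map is a functor of simplicial groupoids, and one must argue carefully that the preimage of a component really is a Borel construction, with no hidden higher structure beyond the $\Aut(V)$-action on $\Gr(V)$. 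The rigidity observation about automorphisms of chains covering $1_V$ is the key input that controls this.
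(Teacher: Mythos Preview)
Your proposal is correct and follows essentially the same approach as the paper. The paper does not give a full proof here (it cites \cite[Proposition 3.3]{Braunling2017}), but the accompanying remark sketches exactly your argument: the fibers of \eqref{satoequiv} are the nerves of the posets $\Gr(V)$, and these are contractible because idempotent completeness makes each $\Gr(V)$ directed (Theorem~\ref{thm:sato_filtered}(c)). Your Borel-construction packaging and the rigidity observation about automorphisms of chains over $1_V$ are the explicit justifications the paper's remark leaves implicit, but the strategy is the same.
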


\begin{rmk}
    The proposition follows from the fact that if $\C$ is idempotent complete, then the Sato Grassmannian $\Gr(V)$ of every element Tate object is a directed and co-directed poset (\cite[Theorem 6.7]{BGW:2016MMJ}). The nerve of this poset is therefore contractible, and these geometric realization of these nerves are the fibers of the map \eqref{satoequiv}.
\end{rmk}

Following the proposition, we obtain the $K$-theoretic index map by restricting the categorical index map \eqref{catindex} to the groupoids of all isomorphisms, geometrically realizing, and picking a homotopy inverse to \eqref{satoequiv} to obtain the map
\begin{equation}\label{kindex}
    \Index\colon |\elTate(\C)^\times|\to^\simeq |\Gr_\bullet^\le(\C)^\times|\to |S_\bullet(\C)^\times|=:BK_{\C}.
\end{equation}
Our goal is to construct an explicit map of Segal objects $B_\bullet\Aut(V)\to K_{S_\bullet(\C)}$, for any elementary Tate object $V$, whose geometric realization is equivalent to the restriction of \eqref{kindex} along the map $|*//\Aut(V)|\to |\elTate(\C)^\times|$.\footnote{Here $*//G$ denotes the one object groupoid with automorphisms $G$, and the map $*//\Aut(V)\to\elTate(\C)^\times$ is given on objects by $*\mapsto V$ and is the identity map on automorphisms.}

\section{The $A_\infty$-Structure of the Index Map}\label{e1}
\subsection{A Generalized Waldhausen Construction}\label{sub:waldhausen}
Let $\C$ be an exact category, and $f\colon \C\to\D$ an exact functor.  Waldhausen's approach to algebraic $K$-theory \cite{MR0802796} hinges on the simplicial exact categories $S_\bullet(\C)$ and $S^r_{\bullet}(f)$ recalled above. We now extend the functors
$$S_\bullet(\C),\;S^r_{\bullet}(f) \colon \Delta^{\op} \to \Excat$$
from the ordinal category, i.e. the category of finite non-empty linearly ordered sets, to the category of filtered finite partially ordered sets. We refer to the resulting functors as the ``generalized Waldhausen construction''.  In Subsection \ref{sub:revisit} we then use the generalized Waldhausen construction to give a treatment of the $A_\infty$-structure of the index map.

\subsubsection{Partially Ordered Sets and Related Structures}\label{subsub:posets}
The current paragraph contains several definitions of combinatorial nature.
\begin{definition}\label{defi:graph}
Let $I$ be a partially ordered set. We denote by $\Gamma(I)$ the directed graph given by the set underlying $I$ as set of vertices, and intervals $a < b$ as edges. We denote the set of directed edges of $\Gamma(I)$ by $E(I)$.
\end{definition}

\begin{example}
For the ordinal $[2]$ we obtain

\[
\xymatrix{
\bullet \ar@{<-}[d]  \ar@{<-}@/^2.0pc/[dd] \\
\bullet \ar@{<-}[d]\\
\bullet
}
\]
for the oriented graph $\Gamma([2])$. While this graph is more traditionally drawn as the boundary of a 2-simplex, the present depiction is chosen to highlight the maximal tree.
\end{example}

We will work with finite, filtered, partially ordered sets with \emph{basepoints} (which are chosen to be minimal elements).
\begin{definition}\label{defi:based_posets}
A \emph{based, finite, filtered, partially ordered set} is a pair $(I;x_0,\dots,x_k)$, where $I$ is a \emph{finite} partially ordered set with a final element, and $(x_0,\dots,x_k)$ is a tuple of  \emph{minimal elements} in $I$.\footnote{It is important to note that the basepoints are not assumed to be pairwise distinct.} A morphism of based partially ordered sets is a map of pairs $$(f,\sigma)\colon(I;x_0,\dots,x_k) \to (I';y_0,\dots,y_{m}),$$ where $f\colon I\to I'$ is a map of partially ordered sets, $\sigma\colon [m] \to [k]$ is a map of finite ordinals, and where $f({x_i}) = y_{\sigma(i)}$. The category of based, finite, filtered, partially ordered sets will be denoted by $\bSet$.
\end{definition}
The assumption of \emph{finiteness} is crucial for the inductive proofs that are given later, but could eventually be relaxed.

Some arguments require us to choose a maximal tree in $\Gamma(I)$ with good properties.
\begin{definition}\label{defi:admissible_tree}
Let $\Gamma$ be an oriented graph. A maximal tree $T \subset \Gamma$ is said to be \emph{admissible}, if for every pair of vertices $(x,y)$, there exists a vertex $z$, and a unique oriented path from $x$ to $z$, respectively $y$ to $z$ within $T$.
\end{definition}

The following examples help to clarify this definition.

\begin{example}
\[
\xymatrix{
& \bullet &  & & & & \bullet & \\
& \bullet \ar[u] & & & & & \bullet  & \\
\bullet \ar[ur] & & \bullet \ar[ul] & & & \bullet \ar[uur] & & \bullet \ar[ul] \ar[uul]
}
\]
The tree on the left is admissible, while the one on the right is not (there is no common vertex that receives an oriented path from the two upper vertices).
\end{example}

\begin{example}
    Let $I$ be a finite, filtered, partially ordered set. An admissible tree $T\subset\Gamma(I)$ always exists. Indeed, let $m\in I$ denote the final element. Then the tree $T$ given by the union of all edges $(x,m)$ for $x \in I$, is admissible.
\end{example}

The definition below introduces the concept of a \emph{framing} of a based partially ordered set.

\begin{definition}\label{defi:framed}
A \emph{framed partially ordered set} is a triple $(I,E(T),x_0,\dots,x_k)$, where $E(T) \subset E(I)$ is the set of edges of an admissible maximal tree, and $(I;x_0,\dots,x_k)$ is a based, finite, filtered, partially ordered set. The category of framed, partially ordered sets $\fPoset$ is the category with framed, partially ordered sets as objects, and morphisms
$$\phi\colon(I,E(T),x_0) \to (I',E(T'),x'_0),$$
where $\phi\colon I \to I'$ is a map of partially ordered sets, mapping the basepoints \emph{bijectively} onto each other, and satisfying $\phi(T) \subset \phi(T')$. We denote by
$$\phi_{\sharp}\colon E(T) \to E(T')_+ = E(T) \cup \{\star\}$$
the map which sends $e \in E(T)$ either to its image $\phi(e) \in E(T')$, or, if $\phi(e)$ consists of a single point, to the basepoint $\star$.
\end{definition}

\subsubsection{Pairs of Exact Categories and Diagrams}\label{subsub:diagrams}
We define the generalized Waldhausen construction in the context of extension closed sub-categories of exact categories.
\begin{definition}\label{defi:pair}
    We denote by $\Expair$ the $2$-category of pairs of exact categories $\C \subset \D$, such that $\C$ is an \emph{extension-closed} sub-category of $\D$. Objects in this category will also be referred to using the notation $(\D,\C)$.
\end{definition}

For every partially ordered set $I$ we have an associated category. For notational convenience, we will not distinguish between these.

\begin{definition}\label{defi:admissible_diagrams}
Let $(\D,\C)\in \Expair$ be a pair of exact categories. Let $I$ be a partially ordered set. An \emph{admissible $I$-diagram} in $(\D,\C)$ is a functor $I \to \D$, sending each arrow in $I$ to an admissible monic in $\D$ with cokernel an object of $\C$. We denote the exact category of such functors by $\Fun_{\C}(I,\D)$.
\end{definition}
The following example serves as a motivation for this definition.
\begin{example}
    We observe that $\Fun_{\C}([n],\D) = S_n^r(\C\subset\D)$ (see Paragraph \ref{par:k}).
\end{example}

In Definition \ref{defi:framed} we introduced the concept of framed, partially ordered sets. Recall the map $\phi_{\sharp}\colon E(T) \to E(T')_+$. By abuse of notation we will also use the symbol $\phi_{\sharp}$ to denote the unique map of pointed sets
$$E(T)_+ \to E(T')_+.$$
Note that, for every object $X$ in a pointed $\infty$-category $\Cc$ with finite coproducts, we have a natural functor
$$\coprod_{?}X\colon(\Set^{\mathrm{fin}}_{\ast})^{\op} \to \Cc.$$
An inductive argument allows us to establish the following lemma. The choice of a maximal tree $T \subset \Gamma(I)$ should be understood as an analogous to choosing a basis for a vector space.

\begin{lemma}\label{lemma:inductive}
Let $(I;E(T),x_0,\dots, x_k)$ be a framed, partially ordered set. We denote by $T\subset \Gamma(I)$ an admissible maximal tree of $\Gamma(I)$. Then there exists an equivalence
$$\phi(T)\colon { }K_{\Fun_{\C}(I,\D)} \cong { }K_{\D} \times { }K_{\C}^{\times E(T)}.$$
Moreover, this equivalence can be seen as a natural equivalence of functors
$${ }K_{\Fun_{-}(-,-)} \simeq { }K_- \times { }K_-^{\times E(-)} \colon\Expair \times (\fPoset)^{\op} \to \Spaces.$$
\end{lemma}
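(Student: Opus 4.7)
The plan is to prove the lemma by induction on $|E(T)| = |I| - 1$, the size of a spanning tree of $\Gamma(I)$. The base case $|I| = 1$ is immediate: then $I = \{m\}$, $E(T) = \emptyset$, $\Fun_\C(I,\D) = \D$, the equivalence $\phi(T)$ is the identity, and naturality is tautological.

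For the inductive step, pick a leaf $\ell$ of $T$ (a vertex with no incoming tree edges; such a leaf exists since $T$ is a finite in-tree rooted at $m$ and $|I| \geq 2$), let $e = (\ell, p(\ell))$ denote its unique outgoing tree edge, and set $(I', T') := (I \setminus \{\ell\}, T \setminus \{e\})$. A direct check shows that $T'$ is an admissible maximal tree in $\Gamma(I')$ and that $(I', T')$ inherits a framed poset structure (discarding any basepoint equal to $\ell$). The strategy is then to apply Theorem \ref{thm:additivity} with $\B := \Fun_\C(I, \D)$, $\mathcal{C}' := \Fun_\C(I', \D)$, and $\A := \C$. The relevant functors are: the restriction $p\colon \B \to \mathcal{C}'$, $F \mapsto F|_{I'}$; a section $s\colon \mathcal{C}' \to \B$ extending $G$ by $s(G)(\ell) := G(p(\ell))$ with the edge $e$ sent to the identity (the remaining non-tree structure maps from $\ell$ are determined by composing along appropriate tree paths, using admissibility of $T$); and an inclusion $i\colon \C \to \B$ sending $X$ to a diagram supported at the edge $e$. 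One verifies that $s$ is left adjoint to $p$, that $ps \cong \id$, and that the co-unit $sp(F) \to F$ is a pointwise admissible monic with cokernel concentrated at $\ell$ and equal to $F(p(\ell))/F(\ell) \in \C$. Theorem \ref{thm:additivity} then yields $K_\B \simeq K_\C \times K_{\mathcal{C}'}$, and combining with the inductive hypothesis $K_{\mathcal{C}'} \simeq K_\D \times K_\C^{\times E(T')}$ gives the desired decomposition, since $|E(T)| = |E(T')| + 1$.

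Naturality in $(\D, \C) \in \Expair$ follows from the evident functoriality of $\Fun_\C$, $p$, $s$, and $i$ in pair morphisms. Naturality in $(I, T) \in \fPoset^{\op}$ requires checking that, for a morphism $\phi$ of framed posets, the pullback $\phi^*$ intertwines the decompositions, with the pointed map $\phi_\sharp\colon E(T)_+ \to E(T')_+$ governing the reshuffling of the $K_\C$ factors and basepoints mapping to the distinguished zero component of $K_\C$.

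The main obstacle lies in the combinatorial verification underlying the inductive step: the section $s$ is well-defined only if, for every non-tree successor $w > \ell$ in $I$, the required admissible monic $G(p(\ell)) \hookrightarrow G(w)$ exists in $G$, which holds precisely when $p(\ell) \leq w$ in $I$. For a generic leaf of a generic admissible tree this condition can fail, so the proof requires a careful choice of $\ell$—for example, a leaf minimizing $|I_{>\ell}|$, or one obtained after a preliminary modification of $T$—together with a combinatorial lemma guaranteeing that a ``peelable'' leaf always exists in an admissible tree over a finite filtered poset. Establishing this existence by exploiting the converging-tree structure of $T$ rooted at $m$ is the principal technical challenge underlying the inductive argument.
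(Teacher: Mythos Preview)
Your inductive strategy via Additivity matches the paper's, but the execution has a directional error that is more serious than the ``peelable leaf'' issue you flag at the end. You remove a leaf $\ell$ of $T$---an element near the \emph{bottom} of $I$---and set $s(G)(\ell) := G(p(\ell))$. Since $\ell < p(\ell)$, the only natural map at $\ell$ between $F$ and $sp(F)$ is $F(\ell) \hookrightarrow F(p(\ell)) = sp(F)(\ell)$; this is a \emph{unit}, making your $s$ a \emph{right} adjoint to restriction, not a left adjoint as Theorem~\ref{thm:additivity} requires. The ``co-unit $sp(F)\to F$'' you describe does not exist, and the cokernel you write down---a diagram carrying $F(p(\ell))/F(\ell)$ at $\ell$ and $0$ at every vertex above it---is not an object of $\Fun_{\C}(I,\D)$ at all, since an admissible monic into $0$ forces the source to vanish. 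Equivalently, the full sub-category of $\Fun_{\C}(I,\D)$ annihilated by restriction to $I\setminus\{\ell\}$ is zero whenever $\ell$ is not maximal in $I$, so there is no way to identify $\C$ with the kernel term. No clever choice of leaf repairs this; it is a structural mismatch, and your ``principal technical challenge'' is a symptom of it rather than the actual obstacle.

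The paper sidesteps this by peeling from the \emph{top}. It first defines the natural transformation $\phi(T)$ once and for all via the exact functor $F\mapsto (F(x_0),(F(y)/F(x))_{(x,y)\in E(T)})$, and then verifies it is an equivalence by induction. In the totally ordered case it deletes the maximal element $n$: the section $s(G)(n):=G(n-1)$ is then genuinely left adjoint, with co-unit $F_{n-1}\hookrightarrow F_n$ and cokernel $(0,\dots,0,F_n/F_{n-1})$ supported at the maximum---a valid object of $\Fun_{\C}([n],\D)$ lying in the image of $\C$. For general $I$, rather than removing a single vertex, the paper splits off an entire totally ordered branch $I''$ of the tree at once and applies the already-established chain case to it, so that the kernel of restriction to $I'$ is identified with $\Fun_{\C}(I''\setminus\{\max I''\},\C)$ rather than with $\C$ itself. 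Once the induction is organized this way, no auxiliary combinatorial lemma about ``peelable'' leaves is required.
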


Although the lemma is stated for a framed partially ordered set with basepoints $x_0,\dots, x_k$, we actually only need the zeroth basepoint $x_0$. An inspection of the proof below shows that all the other basepoints could be discarded.

\begin{proof}[Proof of Lemma \ref{lemma:inductive}]
For every $e = (y_i \leq y_{i+1}) \in E(T)$ we denote by $X_e = F(y_{i+1})/F(y_{i})$. We have an exact functor
$${\Fun_{\C}(I,\D)} \to \D \times \C^{E(T)}$$
which sends $F\colon I \to \D$ to $(F(x_0),(X_e)_{e \in E(T)})$. This map defines a natural transformation between the functors
$$\Fun_-(-,-),\;(-) \times (-)^{E(-)}\colon \Expair \times (\fPoset)^{\op} \to \Excat.$$
Applying the functor ${ }K_{-}\colon \Excat \to \Spaces$, we obtain the natural transformation $\phi(T).$ It remains to show that $\phi(T)$ is an equivalence for each triple $(I,\D,\C)$. We will use induction on the cardinality of $I$ to show this. As a warmup, we begin with the case that $I$ is a totally ordered set. Without loss of generality we may identify it with $\{0 < \dots < n\}$. Moreover, in the totally ordered case, there is only  one possible choice for the framing $(T,x_0)$. The induction is anchored to the case $n = 0$, i.e. the case of the singleton set, which is evidently true.

Assume that $\phi(T)$ has been shown to be an equivalence for totally ordered sets of cardinality $< n$. We denote by $I'$ the framed partially ordered set defined by the subset $\{0 < \dots < n-1\}$. The restriction functor $\Fun_{\C}(I,\D)\to \Fun_{\C}(I',\D)$ sits in a short exact sequence of exact categories
$$\C \hookrightarrow \Fun_{\C}(I,\D)\twoheadrightarrow \Fun_{\C}(I',\D),$$
where we send $X \in \C$ to $(0\hookrightarrow\cdots\hookrightarrow 0\hookrightarrow X) \in \Fun_{\C}(I,\D)$. We also have a splitting, given by
$$\Fun_{\C}(I',\D)\to \Fun_{\C}(I,\D),$$
which sends $(Y_0 \hookrightarrow \cdots \hookrightarrow Y_{n-1})$ to $(Y_0 \hookrightarrow \cdots \hookrightarrow Y_{n-1} \hookrightarrow Y_{n-1})$. By means of the Additivity Theorem \ref{thm:additivity}, we conclude
$${ }K_{\Fun_{\C}(I,\D)} \cong { }K_{\Fun_{\C}(I',\D)} \times { }K_{\C}.$$
Applying the inductive hypothesis to $\Fun_{\C}(I',\D)$, we conclude the assertion for totally ordered sets.

The proof for general $I$ also works by induction on the number of elements. If $I$ is not totally ordered, but of cardinality $n + 1$, we may decompose our framed partially ordered set
$$(I,T) = (I',T') \cup (I'',T''),$$
where $I''$ is totally ordered, $I' \cap I'' = \{\max I''\}$, and $x_0 \in I'$. Consider for example the graph:
\[
\xymatrix{
& & \bullet \ar@{-}[d] & & \\
& & \bullet \ar@{-}[ld] \ar@{~}[rd] & & \\
& \bullet \ar@{-}[ld] \ar@{-}[rd] & & \bullet & \\
\bullet & & \bullet &
}
\]
where edges belonging to $I''$ have been drawn as squiggly lines.

There exists a positive integer $1 \leq k \leq n$, such that $I'' \cong \{0<\dots < k\}$. The restriction functor from $I$-diagrams to $I'$-diagrams belongs to a short exact sequence of exact categories
$$\Fun_{\C}(I''\setminus \{\max I''\},\C) \hookrightarrow \Fun_{\C}(I,\D) \twoheadrightarrow \Fun_{\C}(I',\D),$$
where the left hand side is seen as the exact category of morphisms $(Y_0 \hookrightarrow Y_1 \hookrightarrow \cdots \hookrightarrow Y_{k-1})$, which extends to an $I$-diagram by sending the object $Y_{k-1}$ to every vertex in $I'$. This short exact sequence is split by the functor
$$\Fun_{\C}(I',\D) \to \Fun_{\C}(I,\D),$$
which extends an $I'$-diagram to an $I$-diagram, by sending each vertex $y$ of $I''$ to the object $\max I'' \in I' \cap I''$ (with the identity morphisms as admissible epimorphisms between them). The Addivitity Theorem \ref{thm:additivity} yields
$${ }K_{\Fun_{\C}(I,\D)} \cong { }K_{\Fun_{\C}(I',\D)} \times { }K_{S_k(\C)}.$$
Using the induction hypothesis, we see that the first component is equivalent to ${ }K_{\D} \times { }K_{\C}^{\times E(T')}$, and the second component to ${ }K_{\C}^{\times E(T'')}$. This proves the assertion.
\end{proof}

\subsubsection{The Index Space}\label{subsub:indexspace}

Let $(I;x_0,\dots,x_k)$ be a based, finite, filtered, partially ordered set (Definition \ref{defi:based_posets}). Together with a pair of exact categories $\C \subset \D$, such that $\C$ is extension-closed in $\D$, we define the \emph{index space}, which is the recipient of a map from ${ }K_{\Fun_{\C}(I,\D)}$. It can be thought of as measuring the difference between the basepoints.

\begin{definition}\label{defi:Delta}
\begin{itemize}
\item[(a)] For a based, finite, filtered, partially ordered set $(I;x_0,\dots,x_,)$ we denote by $I^{\Delta}$ the partially ordered set obtained by identifying the basepoints. Cofunctoriality of $\Fun_{\C}(-,\C)$ yields a forgetful functor $\Fun_{\C}(I^{\Delta},\D) \to \Fun_{\C}(I,\D).$
\item[(b)] For an exact category $\D$, let $\Kk_{\D}$ be the connective $K$-theory spectrum. We denote by $\Idx_{\C,I}\D$ the space underlying (i.e. $\Omega^{\infty}$ of) the cofibre of the morphism\footnote{The long exact sequence of homotopy groups implies that this cofibre is again a connective spectrum.}  $${ }\Kk_{\Fun_{\C}(I^{\Delta},\D)} \to { }\Kk_{\Fun_{\C}(I,\D)}.$$
By functoriality of cofibres, this gives rise to a functor
$$\Idx\colon \Expair \times (\bSet)^{\op} \to \Spaces.$$
We refer to $\Idx_{\C,I}\D$ as the \emph{Index-space} of the pair $(\D,\C)$ relative to $(I;x_0,\dots,x_k)$.
\item[(c)] We refer to the map of spaces
\[
    |\Fun_{\C}(I,\D)^\times|\to K_{\Fun_C(I,D)}\to \Idx_{C,I}(D)
\]
as the {\em pre-index map} of the pair $(\D,\C)$ relative to $(I;x_0,\ldots,x_k)$.
\end{itemize}
\end{definition}

The index space is to a large extent independent of $I$, as guaranteed by its functorial nature in Definition \ref{defi:Delta}(b). We record this observation in the next two results. In Proposition \ref{prop:rigidity} we will further refine this statement.

\begin{lemma}\label{lemma:Idx_can}
Let $\C \hookrightarrow \D$ be an extension-closed exact sub-category of an exact category $\D$. We consider an injective morphism of finite, based, filtered, partially ordered sets, in the sense of Definition \ref{defi:based_posets},
$$(I;x_0,\dots,x_k) \to (I';y_0,\dots,y_k),$$
which induces a bijection of basepoints (i.e. on basepoints, it corresponds to the identity map $[k] \to [k]$). Then, the induced morphism of index spaces
$$\Idx_{\C,I}\D \to \Idx_{\C,I'}\D$$
is an equivalence.
\end{lemma}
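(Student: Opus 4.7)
The plan is to argue by induction on $|I'|-|I|$, reducing to the case where $I'=I\cup\{v\}$ for a single non-basepoint vertex $v$. Because the map of basepoints is a bijection, any vertex of $I'\setminus I$ is automatically a non-basepoint, and the injection factors as a chain of single-vertex extensions $I=I_0\hookrightarrow\cdots\hookrightarrow I_n=I'$ in which each intermediate poset still has a final element, by adjoining $\max I'$ first (if it lies outside $I$) and then the remaining vertices below it in any order.

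For the atomic step, I aim to show that the commutative square
\[
\begin{CD}
\Kk_{\Fun_\C(I'^\Delta,\D)} @>>> \Kk_{\Fun_\C(I',\D)} \\
@VVV @VVV \\
\Kk_{\Fun_\C(I^\Delta,\D)} @>>> \Kk_{\Fun_\C(I,\D)}
\end{CD}
\]
is biCartesian, since then the cofibres of the horizontal arrows agree, and these are (after $\Omega^\infty$) precisely $\Idx_{\C,I'}\D$ and $\Idx_{\C,I}\D$ by Definition \ref{defi:Delta}(b). I choose a ``go-to-top'' admissible maximal tree $T\subset\Gamma(I)$ and extend it to an admissible tree $T'\subset\Gamma(I')$ by adjoining a single edge $e_v$ connecting $v$ to the tree; the analogous choice in the quotient posets gives matched trees $T^\Delta\subset T'^\Delta$, because $v$ survives the basepoint identification. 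Applying Lemma \ref{lemma:inductive} at each corner identifies the $K$-theory with $\Kk_\D\times\Kk_\C^{\times E(T_?)}$ compatibly.

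The horizontal inclusions are morphisms in $\fPoset$ whose $\phi_\sharp$ is the inclusion of edge sets avoiding the basepoint $\star$, so by naturality in Lemma \ref{lemma:inductive} they correspond to the projections forgetting the $\Kk_\C$-factor at $e_v$. The vertical maps are induced by basepoint-identification $I\to I^\Delta$, which is \emph{not} a morphism in $\fPoset$; I compute them directly from Definition \ref{defi:Delta}(a) together with the decomposition of Lemma \ref{lemma:inductive}, and find that they act by ``diagonalising'' the factors indexed by the edges of $T$ adjacent to the identified basepoints while leaving every other factor unchanged. Since $v$ is not a basepoint, the edge $e_v$ is one of the unchanged factors, so the square decomposes as a product of an identity square in the $e_v$-direction with a square that is independent of $v$; hence it is biCartesian, both horizontal cofibres are canonically $\Kk_\C$, and the induced map $\Idx_{\C,I}\D\to\Idx_{\C,I'}\D$ is an equivalence.

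The principal obstacle is the analysis of the vertical maps: since basepoint-identification is not a $\fPoset$-morphism, the naturality clause of Lemma \ref{lemma:inductive} does not apply directly, and I will need to verify by hand that its only effect on the $K$-theory decomposition is the duplication of the basepoint-adjacent factors. A secondary point is the admissibility of the extended tree $T'$ when $v$ sits above the previous maximum, where one attaches $v$ to the old top by the single edge $(\max I,v)$; this is again admissible by a direct check.
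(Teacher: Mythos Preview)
Your argument is correct, but it takes a more circuitous route than the paper's. Rather than inducting on $|I'|-|I|$, the paper computes both cofibres directly: choosing the go-to-top tree $T$ in $I$ so that its image $T^\Delta$ in $I^\Delta$ is again admissible, it applies the naturality of Lemma~\ref{lemma:inductive} to the quotient map $I\to I^\Delta$ and identifies the defining map $\Kk_{\Fun_\C(I^\Delta,\D)}\to\Kk_{\Fun_\C(I,\D)}$ with a map $\alpha$ that is the identity on all edge-factors except the basepoint edges, where it is the diagonal $\Kk_\C\to\Kk_\C^{\oplus(k+1)}$. Hence $\mathrm{cofib}(\alpha)\cong\mathrm{cofib}(\Delta_{\Kk_\C})$ depends only on $k$; doing the same for $I'$ with a compatible tree $T'\supset T$ gives the same answer compatibly with restriction. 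Your induction unwinds this one vertex at a time via a biCartesian square; the paper's one-shot computation is shorter and also delivers the explicit identification $\Idx_{\C,I}\D\simeq\Omega^\infty\mathrm{cofib}(\Delta_{\Kk_\C})$ that is reused in Corollary~\ref{cor:pre-index}.

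Two small corrections to your write-up. First, your prose has ``horizontal'' and ``vertical'' reversed relative to your displayed square: the horizontal arrows there are the ones induced by basepoint identification, while the vertical ones are the restrictions along $I\hookrightarrow I'$; your descriptions of what each does are swapped. Second, your ``principal obstacle'' is not one: the quotient $I\to I^\Delta$ \emph{is} a morphism in $\fPoset$ once $I^\Delta$ is given the repeated basepoint tuple $(x,\dots,x)$ and the image tree $T^\Delta$ (this is precisely why the paper takes care to choose $T$ so that $T^\Delta$ remains admissible). The naturality clause of Lemma~\ref{lemma:inductive} then applies directly and hands you the diagonal-on-basepoint-edges description without any by-hand verification.
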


\begin{proof}
By virtue of Lemma \ref{lemma:inductive}, the choice of an admissible maximal tree $T$ in $I$ induces an equivalence of $K$-theory spaces
$${ }K_{\Fun_{\C}(I,\D)} \cong { }K_{\D} \times { }K_{\C}^{\times E(T)}.$$
Recall from Definition \ref{defi:Delta} that $I^{\Delta}$ denotes the finite, based, filtered, partially ordered set obtained by identifying all basepoints. We can choose $T$ in a way, such that its image $T^{\Delta}$ in $I^{\Delta}$ is also an admissible tree. For instance, we could take the tree given by the edges $(x,m)$, where $m = \max I$, and $x$ runs through the elements of $I \setminus \{m\}$. We denote by $e_i$ the (unique) edge of $T$ which contains $x_i$. By construction, the edges $e_i$ map to the same edge in $T^{\Delta}$, and we denote this edge by $e$. We can apply the functoriality of Lemma \ref{lemma:inductive} to obtain the commutative square of connective $K$-theory spectra
\[
\xymatrix{
{ }\Kk_{\Fun_{\C}(I^{\Delta},\D)} \ar[r] \ar[d]_{\cong} & { }\Kk_{\Fun_{\C}(I,\D)} \ar[d]^{\cong} \\
{ }\Kk_{\D} \oplus { }\Kk_{\C}^{\oplus E(T^{\Delta})} \ar[r]^{\alpha} & { }\Kk_{\D} \oplus { }\Kk_{\C}^{\oplus E(T)},
}
\]
where the morphism $\alpha$ is given by the identity $1_{{ }\Kk_{\C}}$ for edges in $E(T) \setminus \{e_0,\dots,e_k\}$, and given by the diagonal map
$$\Delta_{{ }\Kk_{\C}}\colon{ }\Kk_{\C} \to { }\Kk_{\C}^{\oplus(k+1)},$$
for the component $e$. In particular, we see that $\mathrm{cofib}(\alpha) \cong \mathrm{cofib}(\Delta_{{ }K_{\C}})$.

The same analysis applies to $I'$. Because we can choose an admissible maximal tree $T$ in $I$ which extends to an admissible maximal tree $T'$ in $I'$, we see that $\mathrm{cofib}({ }\Kk_{\Fun_{\C}(I^{\Delta},\D)} \to { }\Kk_{\Fun_{\C}(I,\D)})$ is equivalent to
$$\mathrm{cofib}(\Delta_{{ }\Kk_{\C}}\colon { }\Kk_{\C} \to { }\Kk_{\C}^{k+1}) \cong \mathrm{cofib}({ }\Kk_{\Fun^{\Delta}_{\C}(I',\D)} \to { }\Kk_{\Fun_{\C}(I',\D)}).$$
The restriction functor $\Idx_{\C,I}\D \to \Idx_{\C,I'}\D$ is defined independently of any choices. The admissible maximal trees $T$ and $T'$ only play a role in verifying that this map is an equivalence. We therefore see that we have a canonical equivalence between $\Idx_{\C,I}\D$ and $\Idx_{\C,I'}\D$.
\end{proof}

\begin{definition}\label{def:Bk}
For every positive integer $k$ we have an object $B[k]=(B[k];b_0,\dots,b_k) \in \bSet$, given by the set of non-empty intervals in the ordinal $[k]$. An interval is understood to be a subset $J \subset [k]$ with the property that $x \leq y \leq z$ and $x,z \in J$ implies that $y \in J$. The basepoints $(b_i)_{i=0,\dots,k}$ are given by the singletons $\{i\}$.
\end{definition}
We have drawn the filtered partially ordered set $B[2]$ below.
\[
\xymatrix{
& & \bullet \ar@{-}[ld]\ar@{-}[rd] & & \\
& \bullet \ar@{-}[ld]\ar@{-}[rd] & & \bullet \ar@{-}[ld]\ar@{-}[rd] & \\
\bullet & & \bullet &  &\bullet
}
\]
\begin{definition}
For an arbitrary $I = (I,x_0,\dots,x_k)$ in $\bSet$, we denote by $I^B = (I^B;x_0,\dots,x_k)$ the based, finite, filtered, partially ordered set given by $I \cup B[k]$, where we identify the basepoints $b_i = x_i$ and where we extend the inductive ordering of $I$ to $I^B$ by demanding $x \leq y$, for all $x \in B[k]$ and $y \in I \setminus \{x_0,\dots,x_k\}$. To summarize the previous construction, we obtain $I^B$ from $I$ by gluing on a copy of $B[k]$ to $I$, with all new elements being $\leq$ than elements in $I$. This process is functorial in $I$, we denote the resulting functor by
$$(-)^B\colon \bSet \to \bSet.$$
The inclusion $I \subset I^B$ gives rise to a natural transformation of functors
$$1_{\bSet} \Rightarrow (-)^B.$$
\end{definition}

The category $\bSet$ satisfies the following property: for two objects $(I;x_0,\dots,x_k)$ and $(I';y_0,\dots,y_k)$ we can find an $(I'',z_1,\dots,z_k)$, containing sub-objects isomorphic to $I$ and $I'$ (respecting basepoints). Combining this observation with the lemma proven above, we obtain a complete description of index spaces.

\begin{corollary}\label{cor:pre-index}
Let $(I,x_0,\dots,x_k)$ be a based, finite, filtered, partially ordered set with pairwise distinct basepoints. Then, the index space of the pair $(\D,\C)$ is equivalent to
$${ }K_{S_k(\C)} \cong { }K_{\C}^{\times k},$$
This equivalence is functorial in the pair $\C \subset \D$, where $\C$ is extension-closed in $\D$, and it is contravariantly functorial in the based filtered partially ordered set $I$. Moreover, if $M_{\bullet}$ is a simplicial object in $\bSet$ such that, for every non-negative integer $k$, $M_k$ has $k+1$ pairwise distinct basepoints, then we have an equivalence of simplicial spaces
$$\Idx_{\C,M_{\bullet}}\D \cong K_{S_\bullet(\C)}.$$
\end{corollary}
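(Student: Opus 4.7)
The plan is to first invoke Lemma \ref{lemma:Idx_can} together with the joint-embedding property of $\bSet$ recorded just before the statement to reduce to a convenient model for $I$, then compute $\Idx_{\C,I}\D$ explicitly using Lemma \ref{lemma:inductive}, and finally verify enough naturality to obtain the simplicial statement.

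Concretely, I would take the ``fan'' poset $I_0 = \{x_0,\dots,x_k,m\}$ with $x_i < m$ the only nontrivial relations, basepoints $x_0,\dots,x_k$, and final element $m$; this satisfies Definition \ref{defi:based_posets}. Given any $I$ with $k+1$ pairwise distinct basepoints, the joint-embedding property produces an $I''\in\bSet$ containing both $I$ and $I_0$ as basepoint-preserving sub-objects, and Lemma \ref{lemma:Idx_can} then gives canonical equivalences $\Idx_{\C,I}\D \simeq \Idx_{\C,I''}\D \simeq \Idx_{\C,I_0}\D$. With the admissible maximal tree $T = \{e_i = (x_i \leq m)\}_{i=0}^{k}$, Lemma \ref{lemma:inductive} yields $\Kk_{\Fun_{\C}(I_0,\D)} \simeq \Kk_{\D} \oplus \Kk_{\C}^{\oplus(k+1)}$, while $I_0^{\Delta} = \{x, m\}$ gives $\Kk_{\Fun_{\C}(I_0^{\Delta},\D)} \simeq \Kk_{\D} \oplus \Kk_{\C}$. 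The analysis carried out in the proof of Lemma \ref{lemma:Idx_can} identifies the restriction functor, under these equivalences, with $\id_{\Kk_{\D}} \oplus \Delta_{\Kk_{\C}}$. Since the diagonal $\Delta\colon \Kk_{\C} \to \Kk_{\C}^{\oplus(k+1)}$ is split by any projection, its cofibre is $\Kk_{\C}^{\oplus k}$, and iterated Waldhausen additivity identifies this with $\Kk_{S_k(\C)}$. Functoriality in the pair $(\D,\C)$ is automatic from the naturality of each ingredient, and contravariant functoriality in $I$ is built into Definition \ref{defi:Delta}(b).

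For the simplicial statement, contravariant functoriality in $I$ makes $[n] \mapsto \Idx_{\C,M_n}\D$ into a simplicial space whose $n$-simplices are, by the preceding paragraph, equivalent to $\Kk_{S_n(\C)}$. The main obstacle is then to verify that these degreewise equivalences assemble into an equivalence of simplicial spectra $\Idx_{\C,M_{\bullet}}\D \simeq \Kk_{S_{\bullet}(\C)}$. The subtlety is that the explicit identification above depended on choosing an admissible maximal tree, so I need a coherent family of such choices. My approach is to use the ``final-element'' tree construction from the example after Definition \ref{defi:admissible_tree}: for each $n$, take $T_n \subset \Gamma(M_n)$ to consist of all edges into the final element of $M_n$. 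Maps in $\bSet$ act on these trees through the combinatorial rule $\phi_{\sharp}$ of Definition \ref{defi:framed}. A morphism $\sigma\colon[m]\to[n]$ in $\Ddelta$ then induces exactly the identification and omission of basepoint-edges of $T_n$, and under the identification $\Idx_{\C,M_n}\D \simeq \Kk_{\C}^{\oplus n}$ this reproduces precisely the face and degeneracy maps of $\Kk_{S_{\bullet}(\C)}$ coming from Waldhausen additivity. A short combinatorial book-keeping argument then promotes the degreewise equivalences to the desired equivalence of simplicial spectra.
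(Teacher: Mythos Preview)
Your pointwise argument is essentially the paper's: you reduce to a convenient model poset via Lemma~\ref{lemma:Idx_can}, compute the cofibre of the diagonal using Lemma~\ref{lemma:inductive}, and identify the answer with $K_{S_k(\C)}$ by additivity. The only cosmetic difference is that you use the ``fan'' poset $I_0$ (what the paper later calls $T[k]$) rather than $B[k]$, and a generic joint embedding $I''$ rather than the specific $I^B$. That part is fine.

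The gap is in the simplicial statement. Your coherence strategy is to choose, for each $n$, the final-element tree $T_n\subset\Gamma(M_n)$ and then invoke the naturality clause of Lemma~\ref{lemma:inductive}. But that naturality is stated for $\fPoset$, where morphisms are \emph{required} to satisfy $\phi(T)\subset T'$. A general morphism $(f,\sigma)$ in $\bSet$ is merely a map of posets on the underlying sets; there is no reason $f$ should send the final element of $M_n$ to the final element of $M_m$, and hence no reason your trees $T_n$ assemble into a simplicial object of $\fPoset$. Without that, the rule $\phi_\sharp$ of Definition~\ref{defi:framed} is not even defined for the structure maps of $M_\bullet$, and the ``short combinatorial book-keeping argument'' you defer to has no foundation. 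Levelwise equivalence alone does not give you a map of simplicial objects to compare.

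The paper sidesteps this entirely. It observes that $[k]\mapsto B[k]$ is a genuine cosimplicial object in $\bSet$ and that $I\mapsto I^B$ is a functor equipped with a natural transformation $1_{\bSet}\Rightarrow(-)^B$. This yields, for any $M_\bullet$, a zigzag of maps of simplicial objects
\[
M_\bullet \longrightarrow M_\bullet^B \longleftarrow B[\bullet]
\]
in $\bSet$, each arrow consisting levelwise of basepoint-bijective inclusions; Lemma~\ref{lemma:Idx_can} then makes both arrows equivalences on index spaces, simplicially. Finally the equivalence $\Idx_{\C,B[\bullet]}\D\simeq K_{S_\bullet(\C)}$ is produced by an explicit exact functor $S_k(\C)\to\Fun_\C(B[k],\D)$ defined at the level of categories (not via any tree choice), which is visibly simplicial in $k$. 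The tree computations in the paper serve only to check that this already-simplicial map is a levelwise equivalence, not to manufacture the simplicial structure. That is the idea your argument is missing.
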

\begin{proof}
Lemma \ref{lemma:Idx_can} implies that we have a canonical equivalence
$$\Idx_{\C,I}\D \cong \Idx_{\C,I^B}\D \cong \Idx_{\C,B[k]}\D.$$
To conclude the argument, we have to show that $\Idx_{\C,B[k]}\D \cong { }K_{S_k(\C)}$.  This equivalence will be shown to be induced by the exact functor
\begin{equation}\label{eqn:SkC}S_k(\C) \to \Fun_{\C}(B[k],\D),\end{equation}
sending $(0 \hookrightarrow X_1 \hookrightarrow \cdots \hookrightarrow X_k)$ to the functor $F$ in $\Fun_{\C}(B[k],\D)$, which maps the interval $[i,j]$ to the object $X_j$. We draw the resulting diagram for $k = 2$ to illustrate the idea behind the definition:
\[
\xymatrix{
& & X_2  \\
& X_1 \ar[ur] & & X_2 \ar[ul]   \\
0 \ar[ur] & & X_1 \ar[ur] \ar[ul] &  & X_2 \ar[ul] }.
\]
Alluding to Lemma \ref{lemma:inductive}, one can prove with the help of the right choice of admissible maximal tree in $B[k]$ that the induced map of index spaces is indeed an equivalence. We choose to work with the naive admissible maximal tree $T$ in $B[k]$, uniquely defined by the property that for every non-maximal element there is a unique edge in $T$ connecting it with the maximum. The image of $T$ in $B[k]^{\Delta}$, i.e. the partially ordered set obtained by identifying the basepoints $b_0,\dots,b_k$ (see Definition \ref{defi:Delta}), is also an admissible maximal tree. We can therefore apply Lemma \ref{lemma:inductive} to analyze the map of spaces
$${ }K_{\Fun_{\C}(B[k]^{\Delta},\D)} \to { }K_{\Fun_{\C}(B[k],\D)}.$$
Doing so, we obtain a commutative diagram of connective $K$-theory spectra (as in the proof of Lemma \ref{lemma:Idx_can})
\begin{equation}\label{eqn:juxtapose}
\xymatrix{
{ }\Kk_{\Fun_{\C}(B[k]^{\Delta},\D)} \ar[r] \ar[d]_{\cong} & { }\Kk_{\Fun_{\C}(B[k],\D)} \ar[d]^{\cong} \\
{ }\Kk_{\D} \oplus { }\Kk_{\C}^{\oplus E(T^{\Delta})} \ar[r]^{\alpha} & { }\Kk_{\D} \oplus { }\Kk_{\C}^{\oplus E(T)},
}
\end{equation}
where the morphism $\alpha$ agrees with the identity $1_{{ }\Kk_{\C}}$ for edges in $E(T) \setminus \{e_0,\dots,e_k\}$, and with the diagonal map
$$\Delta_{{ }\Kk_{\C}}\colon{ }\Kk_{\C} \to { }\Kk_{\C}^{\oplus(k+1)},$$
for the component $e$. This is the same map arising in the proof of Lemma \ref{lemma:Idx_can}, and we have
$$\Idx_{\C,B[k]}\D \cong \Omega^{\infty}\mathrm{cofib}({ }\Kk_{\C} \xrightarrow{\Delta_{{ }\Kk_{\C}}} { }\Kk_{\C}^{\{b_0,\dots,b_k\}}) \cong { }K^{\times k}_{\C},$$
where the last equivalence is defined as the inverse to the composition
\begin{equation}\label{eqn:i}
K^{\times k}_{\C} \to^{i} K_{\C}^{\{b_0,\dots,b_k\}} \to  \Omega^{\infty}\mathrm{cofib}({ }\Kk_{\C} \xrightarrow{\Delta_{{ }\Kk_{\C}}} \Kk_{\C}^{\{b_0,\dots,b_k\}}),
\end{equation}
where the map $i$ is the inclusion of $K^{\times k}_{\C}$ into $K_{\C}^{\{b_0,\dots,b_k\}}$ which misses the $K_{\C}^{\{b_0\}}$-factor. In particular, we see that $i$ corresponds to the map of $K$-theory spaces induced by the functor $\C^{\times k} \to \C^{\{b_0,\dots,b_k\}}$ given by the inclusion of the last $k$ factors.

Recall that we have ${ }K_{S_k(\C)} \cong { }K^{\times k}_{\C}$, with respect to the map induced by the exact functor
\begin{equation}\label{eqn:Svsk}\C^{\times k} \to S_k(\C),\end{equation}
sending
$$(X_1,\dots,X_k) \mapsto (0 \hookrightarrow X_1 \hookrightarrow X_1 \oplus X_2 \hookrightarrow \cdots \hookrightarrow X_1 \oplus \cdots \oplus X_k).$$
Composing the functors
$${ }K^{\times k}_{\C} \to { }K_{S_k(\C)} \to { }K_{\Fun_{\C}(B[k],\D)} \to \Idx_{\C,B[k]}\D \to { }K^{\times k}_{\C}$$
we obtain the identity, as can be checked on the level of exact categories: we have a commutative diagram of exact functors
\[
\xymatrix{
& & \D \times \C^{\{b_1,\dots,b_k\}} \\
\C^{\times k} \ar[r] \ar[urr] & S_k(\C) \ar[r] & \Fun_{\C}(B[k],\D), \ar[u]
}
\]
where the right vertical functor sends $F$ to $(F(b_0),F([1])/F(b_0),\ldots,F([k])/F(b_{k-1}))$. The composition of exact functors represented by the diagonal arrow is given on objects by:
\begin{align*}
    (X_1,\ldots,X_k)&\mapsto (0\into X_1\into X_1\oplus X_2\into\cdots X_1\oplus\cdots \oplus X_k)\\
    &\mapsto ([i,j]\mapsto X_1\oplus\cdots\oplus X_j)\\
    &\mapsto (0,X_1,X_2,\ldots,X_k)
\end{align*}
i.e. it is equivalent to the inclusion of the last $k$ factors in $\C^{\times k+1}$. Applying $K$-theory, and juxtaposing with \eqref{eqn:juxtapose}, we obtain a commutative diagram of spaces
\[
\xymatrix{
& & K_{\D} \times K_{\C}^{\{b_0,\dots,b_k\}} \ar[r] & \Omega^{\infty}\mathrm{cofib}({ }\Kk_{\C} \xrightarrow{\Delta_{{ }\Kk_{\C}}} { }\Kk_{\C}^{\{b_0,\dots,b_k\}})  \\
K^{\times k}_{\C} \ar[r] \ar[urr] & K_{S_k(\C)} \ar[r] & K_{\Fun_{\C}(B[k],\D)} \ar[u] \ar[r] & \Idx_{\C,B[k]}\D. \ar[u]_{\cong}
}
\]
As we observed in \eqref{eqn:i}, the composition of the arrows on the top agrees with the equivalence $\Idx_{\C,B[k]}\D \cong K^{\times k}_{\C}$.

To conclude the argument it suffices to establish the last claim. The functoriality of the index space construction guarantees that $\Idx_{\C,M_{\bullet}}\D$ is a well-defined simplicial space. Since the construction $I \mapsto I^B$ is functorial, we obtain a well-defined simplicial object $M_{\bullet}^B$, which acts as a bridge between $\Idx_{\C,M_{\bullet}}\D$ and $\Idx_{\C,B[\bullet]}\D$, i.e. according to Lemma \ref{lemma:Idx_can} we have equivalences
$$\Idx_{\C,M_{\bullet}}\D \cong \Idx_{\C,M^B_{\bullet}}\D \cong \Idx_{\C,B[{\bullet}]}\D.$$
It therefore suffices to show that $\Idx_{\C,B[{\bullet}]}\D \cong K_{S_\bullet(\C)}$ as simplicial spaces. Since the map \eqref{eqn:SkC} is clearly a map of simplicial objects in exact categories, and a map of simplicial objects is an equivalence if it is a levelwise equivalence, we may conclude the proof.
\end{proof}

\subsubsection{Rigidity of the Pre-Index Map}
In this paragraph we record a consequence of Lemma \ref{lemma:Idx_can}, which we will refer to as the \emph{rigidity} of the pre-index map. In order to formulate the result, we have to introduce a localization of the category $\bSet$.

\begin{lemma}\label{lemma:loc_rigidity}
Consider the class of morphisms $W$ in the category $\bSet$ which consists of maps $(I \to I',[k] \to^{\phi} [k'])$ such that $\phi\colon [k] \to [k']$ is an isomorphism. We denote by $\bSet[W^{-1}]$ the $\infty$-category obtained by localization at $W$. This localization is canonically equivalent to the category $\Delta$ of finite non-empty ordinals, by means of the functor
$$\mathrm{base}\colon \bSet \to \Delta,$$
which sends the pair $(I,(x_0,\dots,x_k))$ to $[k]$.
The functor $B[\bullet]\colon \Delta \to \bSet$ (Definition \ref{def:Bk}) is an inverse equivalence$$\Delta \to \bSet[W^{-1}].$$
\end{lemma}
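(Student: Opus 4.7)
The approach is to check that $\mathrm{base}$ descends to the localization and then to construct a natural equivalence witnessing that $B[\bullet]$ is inverse to the descended functor.

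By definition, a morphism $(f,\sigma)$ lies in $W$ precisely when $\sigma$ is an isomorphism, so $\mathrm{base}$ sends $W$ to isomorphisms in $\Delta$. It therefore factors through $\overline{\mathrm{base}}\colon \bSet[W^{-1}] \to \Delta$. Moreover, the composite $\mathrm{base} \circ B[\bullet]$ is visibly the identity functor on $\Delta$: the object $B[k]$ carries basepoints $(\{0\},\dots,\{k\})$ indexed by $[k]$, and a monotone map $\sigma\colon[k]\to[k']$ induces $B[\sigma]$ sending an interval $J \subseteq [k]$ to $[\sigma(\min J),\sigma(\max J)] \subseteq [k']$; in particular it sends the $i$-th basepoint singleton $\{i\}$ to $\{\sigma(i)\}$.

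The heart of the proof is to produce a natural equivalence $1_{\bSet[W^{-1}]} \simeq B[\bullet] \circ \overline{\mathrm{base}}$. The key tool is the endofunctor $(-)^B\colon \bSet \to \bSet$, which by construction comes equipped with a natural transformation $\iota\colon 1_{\bSet} \Rightarrow (-)^B$ given at $I$ by the inclusion $I \hookrightarrow I^B$. The plan is to further exhibit a natural transformation $j\colon B[\bullet]\circ \mathrm{base} \Rightarrow (-)^B$ whose component at $I$ is the inclusion $B[k] \hookrightarrow I^B = I \cup B[k]$ of the glued-on copy of $B[k]$. Crucially, both $\iota_I$ and $j_I$ act as the identity map $[k]\to[k]$ on the set indexing the basepoints, so both lie in $W$; hence upon localization they assemble into a zig-zag of natural isomorphisms $1 \simeq (-)^B \simeq B[\bullet]\circ \mathrm{base}$, as required.

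The one step requiring care is the naturality of $j$: given $(f,\sigma)\colon(I;x_\bullet)\to(I';y_\bullet)$, one must check that the square
\[
\xymatrix{
B[k] \ar[r]^{j_I} \ar[d]_{B[\sigma]} & I^B \ar[d]^{(f,\sigma)^B} \\
B[k'] \ar[r]^{j_{I'}} & (I')^B
}
\]
commutes. This reduces to unwinding how $(-)^B$ acts on morphisms: on the $I$-summand of $I^B$ it is $f$, while on the $B[k]$-summand it is $B[\sigma]$, with the gluing determined by the basepoint correspondence $\sigma$. I expect this to be the main bookkeeping obstacle; once it is in place, the equivalence $\bSet[W^{-1}] \simeq \Delta$ follows formally from the universal property of localization.
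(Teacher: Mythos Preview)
Your proposal is correct and follows essentially the same route as the paper: both verify $\mathrm{base}\circ B[\bullet]\cong\id_\Delta$, then use the zigzag of natural transformations $\id_{\bSet}\Rightarrow(-)^B\Leftarrow B[\bullet]\circ\mathrm{base}$, observing that each component lies in $W$ and hence becomes invertible after localization. Your write-up is in fact more explicit than the paper's, which simply asserts the existence of the second natural transformation without spelling out the naturality square you flag as the main bookkeeping step.
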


\begin{proof}
Note that we have $\mathrm{base} \circ B[\bullet] \to^{\cong} \id_{\Delta}$.

The universal property of localization of $\infty$-categories implies that the functor $\mathrm{base}$ induces a functor
$$\widetilde{\mathrm{base}}\colon \bSet[W^{-1}] \to \Delta.$$
In particular, we obtain a natural equivalence
$$\widetilde{\mathrm{base}} \circ B[\bullet] \to^{\cong} \id_{\Delta}.$$
Similarly, we recall from the proof of Corollary \ref{cor:pre-index} that we have a natural transformation
$$\id_{\bSet} \to (-)^B\colon \bSet \to \bSet,$$
as well as $B[\bullet]\circ\mathrm{base} \to (-)^B$. Putting these two natural transformations together, we obtain a zigzag
$$\id_{\bSet} \rightarrow (-)^B \leftarrow B[\bullet] \circ \mathrm{base},$$
which induces a natural equivalence of functors
$$\id_{\bSet[W^{-1}]} \to^{\cong} B[\bullet] \circ \widetilde{\mathrm{base}}.$$
We conclude that $B[\bullet]$ and $\widetilde{\mathrm{base}}$ are mutually inverse equivalences of $\infty$-categories (in fact this shows that the $\infty$-category $\bSet[W^{-1}]$ is equivalent to a category).
\end{proof}

We use this localization to draw the following porism from the proof of Corollary \ref{cor:pre-index}.

\begin{proposition}\label{prop:rigidity}
The functor $\Idx\colon \Expair \times \bSet^{\op} \to \Spaces$, of Definition \ref{defi:Delta}, descends along the localization $\bSet \to \bSet[W^{-1}]$ of Lemma \ref{lemma:loc_rigidity}. In particular, by virtue of the equivalence
$$\bSet[W^{-1}] \cong \Delta,$$
we see that $\Idx$ induces a functor
$$\Expair  \times \Delta^{\op} \to \Spaces.$$
\end{proposition}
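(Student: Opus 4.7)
The plan is to invoke the universal property of $\infty$-categorical localization: the functor $\Idx$ descends along $\bSet \to \bSet[W^{-1}]$ precisely when it sends every morphism in $W$ to an equivalence of spaces. The only ingredients needed are Lemma \ref{lemma:Idx_can} together with the zigzag of natural transformations already recorded in the proof of Lemma \ref{lemma:loc_rigidity}.

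First I would recall the zigzag of natural transformations of endofunctors of $\bSet$:
$$\id_{\bSet} \Rightarrow (-)^B \Leftarrow B[\bullet] \circ \mathrm{base}.$$
Evaluated at a based poset $(I;x_0,\dots,x_k)$, the two components are the canonical inclusions $I \hookrightarrow I^B$ and $B[k] \hookrightarrow I^B$. Both are injective morphisms in $\bSet$ which act as the identity on the tuple of basepoint indices, so Lemma \ref{lemma:Idx_can} applies and shows that $\Idx_{\C,-}\D$ turns each of them into an equivalence. Since these equivalences are the components of natural transformations, we obtain a zigzag of natural equivalences of functors $\bSet^{\op} \to \Spaces$:
$$\Idx_{\C,-}\D \xleftarrow{\simeq} \Idx_{\C,(-)^B}\D \xrightarrow{\simeq} \Idx_{\C,B[\mathrm{base}(-)]}\D.$$

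Next I would observe that the right-hand functor factors tautologically through $\mathrm{base}\colon \bSet \to \Delta$. Since morphisms in $W$ are, by definition, those sent by $\mathrm{base}$ to isomorphisms in $\Delta$, they are automatically turned into equivalences by $\Idx_{\C,B[\mathrm{base}(-)]}\D$; transporting along the zigzag above shows that $\Idx_{\C,-}\D$ sends morphisms in $W$ to equivalences as well. The universal property of localization then produces a canonical factorization through $\bSet[W^{-1}]$, and composing with the equivalence $B[\bullet]\colon \Delta \xrightarrow{\simeq} \bSet[W^{-1}]$ from Lemma \ref{lemma:loc_rigidity} yields the advertised functor $\Expair \times \Delta^{\op} \to \Spaces$. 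The same argument works uniformly in the pair $(\D,\C) \in \Expair$, since Lemma \ref{lemma:Idx_can} is itself natural in this variable.

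The only potential subtlety is to check that the two natural transformations in the zigzag are genuinely natural in $I \in \bSet$ (and not just well-defined pointwise), since this is what upgrades the pointwise equivalences of Lemma \ref{lemma:Idx_can} to a natural equivalence of functors. This naturality is built into the construction of $I \mapsto I^B$ and the comparison map $B[\mathrm{base}(I)] \hookrightarrow I^B$ set up immediately before the statement of Lemma \ref{lemma:loc_rigidity}, so no new verification is required; the argument is essentially a repackaging of the proof of that lemma with $\Idx$ taking the place of the identity functor.
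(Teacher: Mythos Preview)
Your proposal is correct and follows essentially the same approach as the paper: both arguments use the zigzag $I \hookrightarrow I^B \hookleftarrow B[\mathrm{base}(I)]$ together with Lemma~\ref{lemma:Idx_can} to exhibit a natural equivalence $\Idx \simeq \Idx \circ B[\bullet] \circ \mathrm{base}$, from which the factorization through $\Delta$ is immediate. Your write-up is slightly more explicit in separating out the step where morphisms in $W$ are shown to map to equivalences before invoking the universal property of localization, but the substance is identical.
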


\begin{rmk}
The above implies that the functor $\Idx$ gives rise to a simplicial object $\Idx_{\bullet}$ in the $\infty$-category of functors $\Fun(\Expair,\Spaces)$. Corollary \ref{cor:pre-index} can be restated as
$$\Idx_{\C,\bullet}\D\cong K_{S_\bullet(\C)}.$$
\end{rmk}

\begin{proof}[Proof of Proposition \ref{prop:rigidity}]
We have seen, in Lemma \ref{lemma:Idx_can}, that every inclusion $I \subset I'$ which restricts to a bijection on basepoints induces an equivalence of index spaces
$$\Idx_{\C,I}\D \cong \Idx_{\C,I'}\D.$$
As in the proof of Corollary \ref{cor:pre-index} we observe that the zigzag of inclusions
$$I \subset I^B \supset B[\mathrm{base}(I)]$$
yields a zigzag of equivalences of index spaces. In particular, we see that the functor $\Idx$ is equivalent to $\Idx \circ B[\bullet] \circ \mathrm{base}$. In particular, it factors through the map $\mathrm{base}\colon \bSet \to \Delta$.
\end{proof}

In Subsection \ref{sub:revisit} we sketch a construction of index spaces for infinite filtered sets, using the rigidity property as main ingredient.

\subsubsection{Three Examples for the Structure of the Pre-Index Map}

In order to shed some light on the abstract constructions introduced above, we take a look at a few concrete examples. This serves a purely expository purpose, and we will only refer to the results of this paragraph to illustrate the theory. The first example is a simple lemma illustrating that the ostensible complexity of the definitions above can be avoided if $\C = \D$.

\begin{example}\label{ex:CisD}
Let $\C$ be an exact category. Then, for every based, filtered, partially ordered set $(I;x_0,\ldots,x_k)$, the pre-index map
$$|\Fun_{\C}(I,\C)^\times| \to \Idx_{\C,I}\C \cong K_{\C}^{\times k}$$
is equivalent to the map
$$F \mapsto (F(x_1) - F(x_0),\dots,F(x_{k})- F(x_{k-1})),$$
where we view $F(x_i)$ as a point in the $K$-theory space $K_{\C}$ and we use the subtraction operation stemming from the infinite loop space structure of $K$-theory spaces (which is well-defined, up to a contractible space of choices).
\end{example}

This follows directly from the next example, by setting $\D = \C$ and using that for every diagram $F \in \Fun_{\C}(I,\C)$ the maps
$F(m)/F(x_i)-F(m)/F(x_{i+1})$
and $F(x_{i+1})-F(x_i)$ are naturally homotopic (this follows from the basic properties of algebraic $K$-theory).
%

\begin{example}
Let $I$ be a based, finite, filtered, partially ordered set such that the $k$ basepoints are pairwise distinct. We denote the unique maximal element of $I$ by $m$. Then, the pre-index map
$$|\Fun_{\C}(I,\D)^\times| \to K^{\times k}_{\C}$$
can be expressed as
$$(F(m)/F(x_0)-F(m)/F(x_1),\dots,F(m)/F(x_{k-1}) - F(m)/F(x_k)).$$
\end{example}

\begin{proof}
For the proof we recall the description of the index space $\Idx_{\C,I}$ given in terms of admissible trees (see the proof of Lemma \ref{lemma:Idx_can}). Let $T$ be the admissible tree in $\Gamma(I)$, consisting precisely of the set of edges $\{e_x\}_{x \in I}$, where $e_x$ connects the point $x$ with the maximal element $m$. As observed in the proof of Lemma \ref{lemma:Idx_can}, the infinite loop space underlying $\Idx_{\C,I}\D$, is equivalent to the cofibre of the map of connective spectra
$$\Kk_{\C}^{E(T^{\Delta})} \to^{\alpha} \Kk_{\C}^{E(T)}.$$
In the homotopy category of spectra this morphism belongs to a distinguished triangle which can be written as a sum of two distinguished triangles: the first summand is given by
$$\Kk_{\C}^{E(T) \setminus \{e_{b_0},\dots,e_{b_k}\}} \to \Kk_{E}^{E(T) \setminus \{e_{b_0},\dots,e_{b_k}\}} \to 0 \to \Sigma\Kk_{\C}^{E(T) \setminus \{e_{b_0},\dots,e_{b_k}\}}$$
and corresponds to the edges in $T$ which do not contain a base point.
The second summand is
$$\Kk_{\C} \to^{\Delta} \Kk_{\C}^{k+1} \to^{\beta} \Kk_{\C}^k \to \Sigma\Kk_{\C},$$
where $\Delta$ denotes the diagonal inclusion, and $\beta$ is given by $(x_0,\dots,x_k) \mapsto (x_0 - x_1,\dots, x_{m-1} - x_m)$.

The claim now follows from the definition of the exact functor $\Fun_{\C}(I,\D) \to \D \times \C^{\times E(T)}$ as
$$F \mapsto (F(b_0),(F(m)/F(x))_{x \in I \setminus \{m\}}),$$
where we use the identification $E(T) = I \setminus \{m\}$.
\end{proof}

\begin{example}\label{ex:Index_homotopies}
Let $I$ be $B[2]$ with its three basepoints $b_0$, $b_ 1$, and $b_2$. It contains three copies of $B[1]$, indexed by the set of unordered pairs of distinct elements in $\{b_0,b_1,b_2\}$. We denote these inclusions by $\phi_{ij}\colon B[1] \to B[2]$. For every $F \in \Fun_\C(I,\D)$, we have a contractible space of homotopies
$$\phi_{01}^*F + \phi_{12}^*F \simeq \phi_{02}^*F$$
in $K_{\C} \cong K_{S_1(\C)} \cong \Idx_{\C,B[1]}\D$.
\end{example}

\begin{proof}
We will construct these homotopies as homotopies of loops in $K_{\C} \cong \Omega|K_{S_\bullet(\C)}|$. By Corollary \ref{cor:pre-index}, for every simplicial object $M_{\bullet}$ in $\bSet$ with $k+1$ basepoints in level $k$, we have a map of simplicial spaces
$$\left(\Fun_{\C}(M_{\bullet},\D)\right)^{\grp} \to K_{S_\bullet(\C)}.$$
We apply this observation to the degenerate simplicial object $M_{\bullet}$, which agrees with $B[k]$ for $k \leq 2$, and satisfies $M_k = B[2]$ for $k \geq 2$, with the last basepoint $x_2$ repeated $k-2$ times in $M_k$. In particular, a diagram $F$ gives rise to a $2$-simplex of the left hand side
\[
\xymatrix{
& \bullet & \\
& F & \\
\bullet \ar@{-}[uur]^{\phi_{01}^*F} \ar@{-}[rr]_{\phi_{02}^*F} & & \bullet \ar@{-}[uul]_{\phi_{12}^*F}
}
\]
with boundary faces $\phi_{01}^*F$, $\phi_{12}^*F$, and $\phi_{02}^*F$. Since $K_{S_0(\C)} \cong 0$, every $1$-simplex induces an element of $\Omega|K_{S_\bullet(\C)}|$. The geometric realization of the triangle above yields a contractible space of homotopies between the loops $\phi_{01}^*F \cdot \phi_{12}^*F$ and $\phi_{02}^*F$.
\end{proof}

The existence of such a homotopy is not surprising. Indeed, passing to $K_0$, this statement amounts to the simple observation that we have the identity
$$F(x_{01})/F(x_0)-F(x_{01})/F(x_1) + F(x_{12})/F(x_1) - F(x_{12})/F(x_2) = F(x_{02})/F(x_0) - F(x_{02})/F(x_2).$$

The pre-index provides a natural contractible space of choices for this homotopy. We return to this at the end of this section.

\subsection{The Index Map for Tate Objects Revisited}\label{sub:revisit}
We now apply the generalized Waldhausen construction to produce a simplicial map
\begin{equation}\label{segalmap}
    N_\bullet\elTate(\C)^\times\to K_{S_\bullet(\C)}
\end{equation}
whose geometric realization is equivalent to the index map. For any elementary Tate object $V$, by pre-composing \eqref{segalmap} with the map
\begin{equation*}
    B_\bullet\Aut(V)\to N_\bullet\elTate(\C)^\times
\end{equation*}
we obtain a map of reduced Segal objects in $\Spaces$
\begin{equation*}
     B_\bullet\Aut(V)\to K_{S_\bullet(\C)}
\end{equation*}
which encodes the $A_\infty$-structure of the index map.

Let $\bcSet$ denote the category of (possibly infinite) filtered posets $I$, together with a choice of basepoints $(x_0,\dots,x_k) \in I^{[k]}$. Note that we do not impose the condition that the basepoints are minimal in $I$.

\begin{definition}\label{defi:tautology}
For $(I;x_0,\dots,x_k) \in \bcSet$, and $(\D,\C) \in \Expair$ we define:
\begin{itemize}
\item[(a)] $\Fun_{\C}(I,\D)$ is the exact category of functors $I \to \D$, such that $x \leq y$ in $I$ is sent to an admissible monomorphism in $\C$ with cokernel in $\D$.
\item[(b)] $\Fun^{\ast}_{\C}(I,\D)$ as the colimit of exact categories $\colim_{I'}\Fun_{\C}(I',\D)$,
\item[(c)] $\Idx_{\C,I}\D$ as the colimit of spaces $\colim_{I'} \Idx_{\C,I'}\D,$
\end{itemize}
where $I'$ ranges over the filtered category of finite based sets $(I';x_0,\dots,x_k)$ together with a map of based sets $(I';x_0,\dots,x_k) \to (I;x_0,\dots,x_k)$, which corresponds to $\id_{[k]}$.
\end{definition}

Just as in the case of finite based sets, these constructions are sufficiently natural in the pair $(\D,\C)$ and the based set $I$. This follows from Lurie's functoriality of (co)limits result \cite[Proposition 4.2.2.7]{Lurie:bh}, applied to the following set-up: we let
$S$ be the (nerve) of the category $\bcSet$, and $Y \to S$ be the constant cartesian fibration with fibre given by the $\infty$-category $\Fun(\Expair,\Spaces)$. We consider the diagram $K \to S$ given by (the nerve of) the category $\bSet/\bcSet$ together with the obvious functor to $\bcSet$. The functor $\bSet \to \Fun(\Expair,\Spaces)$ of Definition \ref{defi:Delta} gives rise to a functor $K \to Y$ which belongs to a commutative diagram
\[
\xymatrix{
K \ar[r] \ar[rd] & Y \ar[d] \\
& S.
}
\]
According to \cite[Proposition 4.2.2.7]{Lurie:bh} there exists a functor $S = \bcSet \to^{\Idx} \Fun(\Expair,\Spaces)$, such that for every $I \in \bcSet$ we have an equivalence $\Idx_I({D,\C}) \cong \colim_{I'/I} \Idx_{\C,I'}\D$, where $I' \in \bSet$.
We record these observations in the lemma below.

\begin{lemma}
There exist functors $$\Fun \colon \bcSet^{\op} \times \Expair \to \Expair,$$ and $$\Fun^\ast \colon \bcSet^{\op} \times \Expair \to \Expair,$$ and $$\Idx\colon \bcSet^{\op} \times \Expair \to \Spaces$$ which are compatible with Definition \ref{defi:tautology}. Moreover there are natural transformations $$\Fun^{\grp} \to (\Fun^\ast)^{\grp} \to \Idx$$ extending the canonical one for finite based sets.
\end{lemma}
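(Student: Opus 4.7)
The plan is to treat $\Fun$, $\Fun^{\ast}$, and $\Idx$ in parallel, using the cartesian-fibration machinery introduced in the paragraph preceding the lemma. The functor $\Fun\colon \bcSet^{\op} \times \Expair \to \Expair$ requires no colimit construction: Definition \ref{defi:tautology}(a) assigns to $(I; x_0,\dots,x_k)$ and $(\D, \C)$ the exact category of admissible $I$-diagrams, and this assignment is tautologically $2$-functorial via restriction of diagrams in the $\bcSet$-variable and post-composition in the $\Expair$-variable.

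The functors $\Fun^{\ast}$ and $\Idx$ are produced by applying \cite[Proposition 4.2.2.7]{Lurie:bh} exactly as sketched above. Setting $S = N(\bcSet)$, $K = N(\bSet/\bcSet)$ with the target projection to $S$, and letting $Y \to S$ be the constant cartesian fibration with fibre $\Fun(\Expair, \Expair)$ (respectively $\Fun(\Expair, \Spaces)$), we feed in the functor $\bSet^{\op} \times \Expair \to \Expair$ computing $\Fun_{\C}(I', \D)$ for finite $I'$ (respectively the functor $\bSet^{\op} \times \Expair \to \Spaces$ computing $\Idx_{\C, I'}\D$, as in Definition \ref{defi:Delta}). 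Lurie's result then outputs functors $\bcSet^{\op} \to \Fun(\Expair, \Expair)$ and $\bcSet^{\op} \to \Fun(\Expair, \Spaces)$ whose pointwise values recover the filtered colimits prescribed by Definition \ref{defi:tautology}(b) and (c).

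For the natural transformations, the pre-index map of Definition \ref{defi:Delta}(c) gives a natural transformation $|\Fun^{\times}| \to \Idx$ on $\bSet^{\op} \times \Expair$; feeding this through the same cartesian-fibration apparatus yields $(\Fun^{\ast})^{\grp} \to \Idx$ on $\bcSet^{\op} \times \Expair$. For $\Fun^{\grp} \to (\Fun^{\ast})^{\grp}$: on finite $I$ the identity $\id_I$ is terminal in $\bSet/I$, so the colimit defining $\Fun^{\ast}_{\C}(I, \D)$ is canonically equivalent to $\Fun_{\C}(I, \D)$; for general $I \in \bcSet$, restriction $\Fun_{\C}(I, \D) \to \Fun_{\C}(I', \D)$ composed with the structure map into the colimit produces a map that is independent of the choice of finite $I' \to I$ by the filtered nature of $\bSet/I$. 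Naturality in both variables is inherited from the finite-level statements.

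The main technical point is verifying that the hypotheses of \cite[Proposition 4.2.2.7]{Lurie:bh} hold in each application --- namely, existence of fibrewise colimits in $Y_s$ over each $s \in S$ and their compatibility with cartesian transport along $Y \to S$. Both follow from the filtered nature of $\bSet/I$ for every $I \in \bcSet$ together with the standard behaviour of filtered colimits in $\Expair$ and $\Spaces$. Once this bookkeeping is in place, the lemma is a formal consequence of the constructions already established at the finite level.
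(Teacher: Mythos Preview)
Your proposal is correct and follows essentially the same approach as the paper: the paper's justification for this lemma is precisely the paragraph preceding its statement, which invokes \cite[Proposition 4.2.2.7]{Lurie:bh} with $S=N(\bcSet)$, $K=N(\bSet/\bcSet)$, and the constant cartesian fibration $Y\to S$ with fibre $\Fun(\Expair,\Spaces)$. You have expanded on this by treating $\Fun$, $\Fun^\ast$, and the natural transformations more explicitly than the paper does, but the underlying argument is the same.
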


Since the category we are taking the colimit over in Definition \ref{defi:tautology} is co-filtered, and for a morphism $I' \to I''$ (inducing the identity on base points) the induced map of index spaces
$$\Idx_{\C,I''}\D \to \Idx_{\C,I'}\D$$
is an equivalence by Lemma \ref{lemma:Idx_can}, we are taking an inverse limit over a co-filtered system of equivalences. Hence, we have a canonical equivalence of index spaces
$\Idx_{\C,I}\D \cong \Idx_{\C,I'}\D.$ This implies at once that the rigidity property (Proposition \ref{prop:rigidity}) holds as well for objects in $\bcSet$.

\begin{definition}\label{defi:tautology2}
Let $\Gr_{\bullet}(\C)^{\grp}$ denote the Grothendieck construction of the functor $\elTate(\C)^{\grp} \to \sSet$, which sends $V \in \elTate(\C)^{\grp}$ to the simplicial set of (unordered) tuples of lattices in $\Gr(V)$, i.e. an $n$-simplex in $\Gr_{\bullet}(\C)^{\grp}$ is given by the data $(V;L_0,\dots,L_n)$, where $V \in \elTate(\C)^{\grp}$, and each $L_i$ denotes a lattice in $V$.
\end{definition}

We will construct a morphism
$$\Gr_{\bullet}(\C)^\times_{\bullet} \to K_{S_\bullet(\C)},$$
which informally speaking, sends $(V;L_0,\dots,L_k)$ to $(\Gr(V);L_0,\dots,L_k) \in \bcSet$, and then computes the index of the tautological diagram $\Gr(V) \to \Pro(\C)$, which sends $L \in \Gr(V)$ to the corresponding Pro-object.
To make this rigorous we begin with a technical observation.

\begin{rmk}\label{rmk:Grothendieck_S}
The Grothendieck construction (for simplicial {\em sets}) turns a simplicial set $M_{\bullet}$ into a category $\widetilde{M}_{\bullet} \to \Delta^{\op}$ over the opposite category of finite non-empty ordinals. We have a canonical equivalence
$$M_{\bullet} \cong \colim_{\widetilde{M}_{\bullet}/\Delta^{\op}} \{\bullet\},$$
where we take a fibrewise colimit (in the $\infty$-category of spaces, see 4.3.1 in \cite{Lurie:bh}) on the left hand side over the constant, singleton-valued diagram indexed by $\widetilde{M}$.
\end{rmk}

We apply this remark to the simplicial set $\Gr_{\bullet}(V)$, where $V$ is a Tate object, in order to define the following morphism.

\begin{definition}\label{defi:theta_2}
For $V\in\elTate(\C)^{\grp}$, consider the canonical map $$\{\Fun_{\C}(\Gr(V)^{\grp},\Pro(\C))^{\grp}\}_{\widetilde{\Gr}_{\bullet}(V)} \to \Idx_{\C,\bullet}\Pro(\C) \cong K_{S_\bullet(\C)}.$$ Pre-composing it with the map
$$\Gr_{\bullet}(V) \to \Fun_{\C}(\Gr(V),\D)^{\grp}$$
which sends $(L_0,\dots,L_k) \in \Gr_k(V)$ to the tautological $\C$-diagram $\Gr(V) \to \Pro(\C)$ of the based set $(\Gr(V),L_0,\dots,L_k)$, we obtain a natural transformation of diagrams indexed by $\elTate(\C)^{\grp}$:
$$\{ \Gr_{\bullet}(V)\}_{\elTate(\C)^{\grp}} \to \{K_{S_\bullet(\C)}\}.$$
By virtue of the universal property of colimits (since the right hand side is a constant diagram), we obtain a morphism
$$\Gr_{\bullet}(\C)^{\grp} \to K_{S_\bullet(\C)}.$$
\end{definition}

\subsection{Comparison}\label{sec:compare}

It remains to verify compatibility of Definition \ref{defi:theta_2} with the index map.

\begin{proposition}\label{prop}
There exists a commutative diagram
\[
\xymatrix{
\Gr_{\bullet}^{\le}(\C)^{\grp} \ar[rd]_{\Index} \ar[r] & \Gr_{\bullet}^{\grp}  \ar[d] \\
 & K_{S_\bullet(\C)}
}
\]
in the $\infty$-category of simplicial diagrams of spaces.
\end{proposition}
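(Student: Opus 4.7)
The plan is to verify the proposition by unpacking both composite maps $\Gr^{\le}_\bullet(\C)^\times \to K_{S_\bullet(\C)}$ on a generic chain $\mathbf{L} = (L_0 \hookrightarrow L_1 \hookrightarrow \cdots \hookrightarrow L_n \hookrightarrow V)$, showing they agree after applying the equivalence $K_{S_n(\C)} \simeq K_\C^{\times n}$ of Corollary \ref{cor:pre-index}, and then invoking the naturality of the pre-index construction to promote this pointwise agreement to an equivalence of simplicial maps of spaces.

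On the $\Index$ side, $\Index(\mathbf{L}) = (L_1/L_0 \hookrightarrow \cdots \hookrightarrow L_n/L_0) \in S_n(\C)$, whose image in $K_\C^{\times n}$ under the successive-cokernel inverse of \eqref{eqn:Svsk} is the tuple $(L_1/L_0,\, L_2/L_1,\, \ldots,\, L_n/L_{n-1})$. For the other composite, the horizontal inclusion $\Gr^{\le}_n \to \Gr_n$ sends $\mathbf{L}$ to the unordered tuple $(V;L_0,\ldots,L_n)$, which Definition \ref{defi:theta_2} maps to the pre-index of the tautological diagram $\Gr(V)\to \Pro(\C)$ based at $(L_0,\ldots,L_n)$. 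By Definition \ref{defi:tautology} and the rigidity property (Proposition \ref{prop:rigidity}), this index space is canonically equivalent to the one computed on the finite based sub-poset $I := \{L_0 < L_1 < \cdots < L_n\} \subset \Gr(V)$, with the $L_i$ themselves as basepoints and $L_n$ as the (now present) maximum. Applying the second example of Subsection 3.1.5 (the formula expressing the pre-index as $(F(m)/F(x_0) - F(m)/F(x_1),\ldots,F(m)/F(x_{n-1}) - F(m)/F(x_n))$) to the restricted tautological diagram $F(L_i) = L_i$, we obtain
\[
\bigl( L_n/L_0 - L_n/L_1,\; L_n/L_1 - L_n/L_2,\; \ldots,\; L_n/L_{n-1} - L_n/L_n \bigr).
\]
Each difference simplifies, via the exact sequence $L_i/L_{i-1} \hookrightarrow L_n/L_{i-1} \twoheadrightarrow L_n/L_i$ (whose terms lie in $\C$ by Theorem \ref{thm:sato_filtered}(b)) and additivity in $K$-theory, to $L_i/L_{i-1}$. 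Thus both composites produce the same tuple $(L_1/L_0,\ldots,L_n/L_{n-1})$, establishing commutativity at the level of individual simplices.

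It remains to upgrade this simplex-wise match to an equivalence of simplicial objects in $\Spaces$. The face and degeneracy maps on $\Gr^{\le}_\bullet$ correspond, on the $\Index$ side, to the standard simplicial structure on $S_\bullet(\C)$, and on the comparison side, to morphisms of based sub-posets that induce the identity on the indexing $[n]$; both are controlled by the universal cosimplicial object $B[\bullet] \in \bSet$ via Lemma \ref{lemma:loc_rigidity}. The main obstacle is therefore to check that the canonical equivalence $\Idx_{\C,B[\bullet]}\D \simeq K_{S_\bullet(\C)}$ of Corollary \ref{cor:pre-index} intertwines the pre-index of the tautological chain-diagram with $\Index$ followed by the successive-cokernel equivalence, as a map of simplicial spaces rather than merely at each level. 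This in turn reduces, via the commutative diagram of exact functors exhibited in the proof of Corollary \ref{cor:pre-index}, to verifying that the identification of exact functors respects face and degeneracy maps — a bookkeeping check ultimately controlled by the Additivity Theorem applied to the nested short exact sequences above, which guarantees the requisite coherent homotopies in the $\infty$-category of simplicial spaces.
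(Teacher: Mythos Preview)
Your simplex-by-simplex verification is morally correct at the level of $\pi_0$, but the final ``upgrade'' paragraph is where the real content of the proposition lies, and it is not a proof. The statement concerns a commuting triangle of \emph{simplicial spaces}, not merely of sets of components. Each of the identifications you invoke---the subtraction formula from the second example in \S3.1.4, the simplification $L_n/L_{i-1}-L_n/L_i \simeq L_i/L_{i-1}$ via additivity, the successive-cokernel inverse to \eqref{eqn:Svsk}---is an equivalence in $K_\C$ that is well-defined only up to a contractible space of choices. To assemble these into a homotopy of maps of simplicial objects you would need to choose all of them coherently across every simplicial level and every face and degeneracy map, and the Additivity Theorem alone does not hand you such a coherent system. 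Saying that this ``reduces to bookkeeping controlled by Additivity'' is precisely the assertion to be proved, not an argument for it.

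The paper's proof circumvents this by never leaving the world of exact categories until the very end. It introduces an auxiliary cosimplicial poset $A[\bullet]$ (intervals in $[n]$ ordered lexicographically) and extends the tautological $\Gr(V)$-diagram to $\Gr(V)^A$; it then uses an explicit admissible epimorphism in $\Fun_\C(A[n],\Pro(\C))$ onto the diagram induced by $(0\hookrightarrow L_1/L_0\hookrightarrow\cdots\hookrightarrow L_n/L_0)$, and shows via a left $s$-filtering argument (Lemma~\ref{lemma}, Corollary~\ref{cor}, invoking Schlichting's localization theorem) that the kernel is killed upon passage to the index space. The compatibility of the resulting functor $S_\bullet(\C)\to\Fun_\C(A[\bullet],\Pro(\C))$ with the equivalence $\Idx_{\C,\bullet}\Pro(\C)\simeq K_{S_\bullet(\C)}$ is then checked by a further comparison through yet another cosimplicial poset $T[\bullet]$. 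All of this happens as strict maps of simplicial exact categories, so the coherence is automatic before one applies $K_{(-)}$.

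A smaller technical point: your restricted poset $I=\{L_0<\cdots<L_n\}$ with all $L_i$ as basepoints is not an object of $\bSet$, since Definition~\ref{defi:based_posets} requires basepoints to be minimal and only $L_0$ is. Hence it does not appear in the colimit of Definition~\ref{defi:tautology}(c), and you cannot directly invoke rigidity to pass to it. This is exactly why the paper manufactures $A[n]$: to supply a finite based poset with genuinely minimal basepoints receiving the chain.
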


The proof rests on the following technical lemma.

\begin{lemma}\label{lemma}
Let $S \in \bcSet$ be a based filtered set with basepoints $(x_0,\dots,x_n)$. We assume that
\begin{enumerate}
\item[(a)] we have $x_0 \leq \cdots \leq x_n$,
\item[(b)] for $s \in S$ we have $s \leq x_i$ for $i = 0,\dots,n$ implies $s= y_j$ for some $j$ with $0 \leq j \leq i$,
\item[(c)] there exists $y \in S$, such that $y \geq x_i$ for $i = 0,\dots,n$,
\item[(d)] there is a surjective morphism $S \to^{\phi} S'$ of based filtered sets, which contracts the elements $(x_0,\dots,x_n)$ to a single point $x \in S'$, and is an equivalence on $S \setminus \{x_0,\dots,x_n\}$.
\end{enumerate}
Then the functor $\phi^*\colon \Fun_{\C}(S',\D) \to \Fun_{\C}(S,\D)$ is a left $s$-filtering embedding in the sense of \cite[Definition 1.5]{MR2079996}.
\end{lemma}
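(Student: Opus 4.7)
The plan is to verify directly each of the axioms defining a left $s$-filtering embedding as given in \cite[Definition 1.5]{MR2079996}, exploiting the combinatorial structure of $S$ encoded in hypotheses (a)--(d). These axioms amount to full faithfulness of $\phi^*$ together with closure of its essential image under extensions, a \emph{filtering} condition on morphisms into the essential image, and a \emph{specialness} condition relating admissible monics and epimorphisms.

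First I would identify the essential image of $\phi^*$ as the full sub-category of $\Fun_\C(S,\D)$ consisting of diagrams $F$ for which $F(x_0) = \cdots = F(x_n)$ with the transition morphisms induced by (a) being identity maps. Full faithfulness then follows at once from hypothesis (d), which guarantees that $\phi$ restricts to a bijection on $S \setminus \{x_0,\dots,x_n\}$, so a natural transformation between pullback diagrams is forced by naturality to be constant at the collapsed basepoint. Since the exact structure on $\Fun_\C(S,\D)$ is pointwise, both exactness of $\phi^*$ and closure of the essential image under extensions reduce to a direct pointwise check together with the short five lemma applied at the basepoints (the outer terms being independent of the basepoint index forces the middle term to be so as well).

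The bulk of the argument is the verification of the filtering axiom: given a morphism $\alpha\colon G\to\phi^*F'$ in $\Fun_\C(S,\D)$, one must construct a factorization through the essential image. Hypothesis (b) asserts that every element of $S$ weakly below some $x_i$ is itself a basepoint, so the chain $x_0 \le \cdots \le x_n$ sits as an initial segment of $S$ with no extraneous elements beneath it, while (c) provides a common upper bound $y$ for the basepoints. Using these, the factorizing diagram $\phi^*H'$ is built by pushing out $G$ along the admissible monics $G(x_i)\hookrightarrow G(x_n)$, replacing the chain at the basepoints by its top term $G(x_n)$; the hypotheses (b) and (c) ensure that this pushout assembles into a well-defined admissible $S$-diagram and that the canonical natural transformation $G \to \phi^*H'$ enjoys the exactness property required by the filtering axiom. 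The factorization of $\alpha$ through $\phi^*H'$ is then supplied by the universal property of the pushout. A parallel argument based on the dual construction verifies the specialness axiom.

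The main obstacle is the pushout construction: one must check that the resulting diagram is still admissible (its transition morphisms are admissible monics with cokernels in $\C$) and that the natural transformation $G \to \phi^*H'$ has the exactness required to qualify for the filtering axiom in the exact category $\Fun_\C(S,\D)$. Hypothesis (b) is what rules out wild elements sandwiched between basepoints that would obstruct the pushout, and (c) supplies the amalgamating object that makes the construction coherent across all of $S$. Hypothesis (d), the bijectivity of $\phi$ on non-basepoints, guarantees that the construction leaves the rest of the diagram undisturbed and that $\phi^*H'$ genuinely arises from a diagram on $S'$.
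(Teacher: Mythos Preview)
Your approach is essentially the paper's, but packaged less efficiently and with a direction error in the filtering axiom. The paper observes that $\phi$ admits a section $s\colon S'\to S$ of posets (sending the collapsed basepoint back to a single $x_i$), and then all the work is done by the endofunctor $\phi^*s^*$ together with the natural admissible monic relating it to the identity. Your ``pushout replacing the basepoint chain by a single value'' is exactly $\phi^*s^*$ unpacked; recognizing it as arising from an adjoint makes full faithfulness, left filtering, and left specialness essentially automatic rather than requiring separate pointwise verifications.

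The substantive slip: Schlichting's left filtering condition concerns morphisms \emph{from} the subcategory, not \emph{to} it. One must show that every $\phi^*F'\to G$ factors as $\phi^*F'\to\phi^*H'\hookrightarrow G$ with the second arrow an admissible monic. Your construction produces a map $G\to\phi^*H'$ in the wrong direction, because you collapse to the \emph{top} basepoint $x_n$. Collapsing instead to the \emph{bottom} basepoint $x_0$ (which is what makes the section $s$ order-preserving, via hypotheses (a) and (b)) yields the admissible monic $\phi^*s^*G\hookrightarrow G$, and any $\phi^*F'\to G$ factors through it by naturality at $x_0\le x_i$. With $x_n$ the section need not even be order-preserving: an element $a$ with $x_0\le a$ but $a$ incomparable to $x_n$ would obstruct it, and your replaced diagram would fail to be functorial at $x_0\le a$. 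Once you swap $x_n$ for $x_0$ and reverse the arrow, your argument and the paper's coincide.
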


\begin{proof}
Let $S' \to S$ be the unique section to $\phi$ sending $x$ to $x_n$. There is a natural transformation $\phi^*s^* \hookrightarrow \id$, which is object-wise an admissible monomorphism. Moreover we have a natural isomorphism $s^* \phi^* \simeq (\phi \circ s)^* \simeq \id$. We therefore conclude that $s^*$ is the left adjoint to $\phi^*$, and that $\phi^*$ is fully faithful.

Given an admissible short exact sequence $X \hookrightarrow Y \twoheadrightarrow \phi(Z)$ with $Z \in \phi^*(\Fun_{\C}(S',\D))$ we may apply the exact functor $\phi^* s^*$ to obtain a short exact sequence $\phi^* s^* X \hookrightarrow \phi^* s^* Y \twoheadrightarrow \phi(Z)$ in the essential image of $\phi$. The natural transformation $\phi^* s^* \to \id$ yields that $\phi^*$ is left special.

It remains to show that $\phi^*$ is left special: every morphism $\phi(X) \to Z$ factors through an admissible monomorphism $\phi(X) \to \phi(Y) \hookrightarrow Z$. This is possible since one can define $Y=s^*Z$, and consider the admissible monomorphism $\phi^*s^*(Z) \hookrightarrow Z$.
\end{proof}

Theorem 2.10 in \cite{MR2079996} implies the following.

\begin{corollary}\label{cor}
For $S$ and $S'$ as in Lemma \ref{lemma}, there is a natural morphism $$K_{\Fun_{\C}(S,\D)/\phi^*\Fun_{\C}(S',\D)} \to \Idx_{S,\C}\D$$ and in particular we have a commutative diagram of spaces
\[
\xymatrix{
\Fun_{\C}(S,\D)^{\grp} \ar[r] \ar[rd] & \left(\Fun_{\C}(S,\D)/\phi^*\Fun_{\C}(S',\D)\right)^{\grp} \ar[d] \\
& \Idx_{S,\C}\D.
}
\]
\end{corollary}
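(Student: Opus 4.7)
The proof is essentially an application of Schlichting's localization theorem together with the definition of the index space as a $K$-theoretic cofiber, plus some bookkeeping to match the two constructions.

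First, I would invoke Lemma \ref{lemma}: the inclusion $\phi^*\colon \Fun_{\C}(S',\D) \hookrightarrow \Fun_{\C}(S,\D)$ is a left $s$-filtering embedding in the sense of \cite[Definition 1.5]{MR2079996}. Applying Theorem 2.10 of \emph{op.\ cit.}\ yields a fibration sequence of connective $K$-theory spectra
$$\Kk_{\Fun_{\C}(S',\D)} \longrightarrow \Kk_{\Fun_{\C}(S,\D)} \longrightarrow \Kk_{\Fun_{\C}(S,\D)/\phi^*\Fun_{\C}(S',\D)},$$
so the cofiber of the first arrow is naturally identified with the $K$-theory of the Schlichting quotient.

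Second, hypothesis (d) of Lemma \ref{lemma} identifies $S'$ with the collapsed object $S^{\Delta}$ in which the basepoints $(x_0,\dots,x_n)$ are glued to a single point. Under this identification, the restriction functor $\Fun_{\C}(S^{\Delta},\D) \to \Fun_{\C}(S,\D)$ appearing in the defining cofiber of $\Idx_{S,\C}\D$ (Definition \ref{defi:Delta}(b), extended to $\bcSet$ through Definition \ref{defi:tautology}(c)) is exactly $\phi^*$. Comparing the two cofibers and taking $\Omega^{\infty}$ produces the desired natural map
$$K_{\Fun_{\C}(S,\D)/\phi^*\Fun_{\C}(S',\D)} \longrightarrow \Idx_{S,\C}\D.$$

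For the commutative triangle, both composite maps from $|\Fun_{\C}(S,\D)^{\times}|$ to $\Idx_{S,\C}\D$ factor through $\Kk_{\Fun_{\C}(S,\D)}$: the top-then-right path does so via the localization functor, while the diagonal pre-index map does so by its very definition (Definition \ref{defi:Delta}(c)). Commutativity then follows from functoriality of $K$-theory applied to the localization functor $\Fun_{\C}(S,\D) \to \Fun_{\C}(S,\D)/\phi^*\Fun_{\C}(S',\D)$, together with the identification of cofibers established above.

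The main technical point, and the only place care is needed, is the passage from the finite setting of $\bSet$ to the possibly infinite $S \in \bcSet$: the index space $\Idx_{S,\C}\D$ is defined as a filtered colimit of finite approximations (Definition \ref{defi:tautology}(c)), and one must verify that Schlichting's fibration assembles compatibly with this colimit. This is routine given that $K$-theory commutes with filtered colimits of exact categories along exact functors and that the left $s$-filtering property descends to the approximating system, but it should be checked explicitly.
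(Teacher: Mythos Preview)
Your proposal is correct and matches the paper's approach exactly: the paper's proof consists of the single sentence ``Theorem 2.10 in \cite{MR2079996} implies the following,'' and your write-up is a faithful unpacking of that citation---identifying $S'$ with $S^{\Delta}$ via hypothesis~(d), applying Schlichting's localization sequence, and comparing with the defining cofiber of $\Idx_{S,\C}\D$. Your final paragraph on the passage from $\bSet$ to $\bcSet$ goes beyond what the paper records but is the right thing to check.
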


\begin{proof}[Proof of Proposition \ref{prop}]
By Definition \ref{defi:theta_2}, the composition $\Gr_\bullet^\le(\C)^\times_\bullet \to \Gr_\bullet(\C)^\times_\bullet \to K_{S_\bullet(\C)}$ is equivalent to the levelwise colimit of the map of constant diagrams
$$\{\ast\}_{\widetilde{\Gr^{\le}_{\bullet}(V)}/\Delta^{\op}} \to \{\Fun_{\C}(\Gr(V),\Pro(\C))^{\grp}\}_{\widetilde{\Gr^{\le}_{\bullet}(V)}/\Delta^{\op}} \to  \{\Idx_{\C,\Gr_{\bullet}}\Pro(\C)\}_{\widetilde{\Gr^{\le}_{\bullet}(V)}/\Delta^{\op}},$$
where $\ast$ is sent to the canonical admissible diagram $\Gr(V) \to \Pro(\C)$ sending $L \in \Gr(V)$ to the Pro object $L$.

Next we introduce a variant of the construction $S^B$. Let $A[n]$ be the filtered poset $\{(x,y) \in [n] \times [n] | x \leq y\}$, ordered lexicographically. It is clear that this defines a co-simplicial object in the category of filtered posets. For a based poset $(S;x_0,\dots,x_n)$, we define $S^A$ to be the pushout of posets
$$S^A = S \cup_{[n]} A[n]$$
along the map $[n] \to S$ given by $i \mapsto x_i$, and $[n] \to A[n]$ given by the diagonal. As basepoints we choose $a_i=(i,0) \in A[n]$ for $0 \leq i \leq n$.

In the following we use the notation $L_0 \subset \cdots \subset L_k$ to denote an element in $\Gr_k^{\leq}(V)$. The tautological $\Gr(V)$-diagram extends to $\Gr(V)^A$, by sending the interval $(x,y)$ to $L_x$. For the resulting $A[n]$-subdiagram, we have an admissible epimorphism in $\Fun_{\C}(A[n],\Pro(\C))$, to the admissible $A[n]$-diagram obtained by restricting the admissible $[n]$-diagram
\begin{equation}\label{eqn:restrict}0 \hookrightarrow L_1/L_0 \hookrightarrow \cdots \hookrightarrow L_n/L_0
\end{equation}
 the morphism of filtered posets $A[n] \to [n]$ given by the projection to the first component.

The kernel of the admissible epic relating the two diagrams lies in $\Fun_{\C}(A([n])',\Pro(\C))$. By Corollary \ref{cor} the above colimit is therefore equivalent to the colimit of constant diagrams
$$\{\ast\}_{\widetilde{\Gr^{\le}_{\bullet}(V)}/\Delta^{\op}} \to S_{\bullet}\C^{\grp} \to \{\Fun_{\C}(A[\bullet],\Pro(\C))^{\grp}\}_{\widetilde{\Gr^{\le}_{\bullet}(V)}/\Delta^{\op}} \to  \{\Idx_{\C,A[{\bullet}]}\Pro(\C)\}_{\widetilde{\Gr^{\le}_{\bullet}(V)}/\Delta^{\op}}.$$
This shows that the resulting $A[n]$-subdiagram lies in the image of the functor $$S_\bullet(\C) \to \Fun_{\C}(A[n],\Pro(\C)).$$ Assuming that this functor is compatible with the equivalence $\Idx_{\C,\bullet}\Pro(\C) \cong K_{S_\bullet(\C)}$, we use that the morphism $$\Gr^{\le}_{\bullet}(V)^{\grp} \to K_{S_\bullet(\C)}$$
factors through the canonical map $\Gr^{\le}_{\bullet} \to S_{\bullet}(\C)^{\grp}$ to conclude the proof.

In order to establish the required compatibility, we denote by $T[n]$ the based filtered set, given by $n+1$ basepoints $x_0,\dots,x_n$ and a unique maximal point $m$. There are natural maps $T[n] \to A[n]$ and $T[n] \to B[n]$. The commutative diagram
\[
\xymatrix{
& \Fun_{\C}(B[\bullet],\D) \ar[rd] & \\
S_{\bullet}\C \ar[ru] \ar[rd] &  & \Fun_{\C}(T[\bullet],\D) \\
& \Fun_{\C}(A[\bullet],\D) \ar[ru] &
}
\]
of exact categories commutes. It induces a commutative diagram
\[
\xymatrix{
& \Idx_{\C,B[\bullet]}\D \ar[rd]^{\simeq} & \\
S_{\bullet}\C \ar[ru]^{\simeq} \ar[rd] &  & \Idx_{\C,T[\bullet]}\D \\
& \Idx_{\C,A[\bullet]}\D \ar[ru]_{\simeq} &
}
\]
of equivalences by virtue of rigidity (Proposition \ref{prop:rigidity}).
\end{proof}

Choose a representative $V$ for every isomorphism class of elementary Tate objects, and select a lattice $L \in \Gr(V)$. This allows one to define a pseudo-simplicial map of simplicial groupoids
\[N_{\bullet}\elTate(\C)^{\grp} \simeq \bigsqcup_{V \in \elTate(\C)/\text{iso}} B_{\bullet} \Aut(V) \to^{\Lc} \Gr_{\bullet}(\C)^{\grp},\]
where we view $B_\bullet\Aut(V)$ as a discrete simplicial groupoid (i.e. no nontrivial morphisms), and where $\Lc$ sends an $n$-simplex $(g_1,\dots,g_n) \in B_n\Aut(V)$ to $(L,g_1L,\dots,g_n\cdots g_1 L)$. Note that this map is simplicial away from $d_0$, i.e. $d_i\Lc=\Lc d_i$ for $i>0$, and $s_i\Lc=\Lc s_i$ for all $i$. The component at $\bar{g}:=(g_1,\ldots,g_n)$ of the natural isomorphism $\Lc d_0\to^{\alpha} d_0\Lc$ is given by
\begin{equation*}
    \alpha_{\bar{g}}=(g_1,g_2g_1g_2^{-1},\ldots,g_n\cdots g_1 g_2^{-1}\cdots g_n^{-1})\colon (L,g_2L,\ldots,g_n\cdots g_2 L)\to (g_1L,g_2g_1L,\ldots,g_n\cdots g_1 L).
\end{equation*}
One can check directly that $d_0\alpha_{\bar{g}}\circ \alpha_{d_0\bar{g}}=\alpha_{d_1\bar{g}}$ as required for $(\Lc,\alpha)$ to give a pseudo-simplicial map.

Post-composing this map with $\Gr_{\bullet}(\C)^{\times} \to K_{S_{\bullet}\C}$ of Definition \ref{defi:theta_2} we obtain a morphism of Segal objects
\begin{equation}\label{eqn:segal}N_{\bullet}\elTate(\C)^{\grp} \to K_{S_{\bullet} \C}.\end{equation}

\begin{theorem}
The map of $A_{\infty}$-objects $\Aut(V) \to K_{\C}$ encoded by \eqref{eqn:segal}, agrees with the natural $A_{\infty}$-structure obtained by applying $\Omega$ to the map $B\Aut(V) \to BK_{\C}$.
\end{theorem}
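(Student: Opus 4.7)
The plan is to reduce the theorem to Proposition \ref{prop} combined with the standard equivalence between reduced Segal spaces and group-like $A_\infty$-spaces. The underlying principle is that an $E_1$-map between group-like $E_1$-spaces is determined, up to a contractible space of choices, by its delooping. Hence it suffices to identify the geometric realization of the restricted Segal map $B_\bullet\Aut(V)\to K_{S_\bullet\C}$ with the map $B\Aut(V)\to BK_\C$ obtained from the index map, and then invoke the universal property of the Segal model.

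The first step is to package the pseudo-simplicial map $\Lc$ together with its coherent cocycle $\alpha_{\bar g}$ (satisfying $d_0\alpha_{\bar g}\circ\alpha_{d_0\bar g}=\alpha_{d_1\bar g}$) as an honest map in the $\infty$-category of simplicial spaces. Viewing $\Gr_\bullet(\C)^\times$ as a simplicial $\infty$-groupoid, a pseudo-simplicial map from a discrete simplicial groupoid equipped with such a $2$-cocycle rectifies uniquely to a simplicial map of the associated simplicial spaces; composing with the map $\Gr_\bullet(\C)^\times\to K_{S_\bullet\C}$ of Definition \ref{defi:theta_2} then yields the map of reduced Segal objects encoded by \eqref{eqn:segal}, restricted along $B_\bullet\Aut(V)\hookrightarrow N_\bullet\elTate(\C)^\times$. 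Next, I would apply Proposition \ref{prop} pulled back along $|{\ast//\Aut(V)}|\to|\elTate(\C)^\times|$ to produce a commutative diagram
\[
\xymatrix{
B\Aut(V) \ar[rr] \ar[rd] & & BK_{\C} \\
& |\Gr^{\le}_{\bullet}(\C)^{\grp}| \ar[ru]_{\Index} &
}
\]
of pointed spaces, identifying the geometric realization of the restricted Segal map with the index map of \eqref{kindex}.

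Finally, I would invoke the Segal space model for $E_1$-algebras in spaces (cf.\ \cite[Chapter 4]{Lurie:ha}): the functor $Y_\bullet\mapsto\Omega|Y_\bullet|$ is an equivalence between reduced group-like Segal objects in $\Spaces$ and group-like $E_1$-spaces. Under this equivalence, $K_{S_\bullet\C}$ corresponds to $K_\C$ with its canonical infinite loop space structure, and $B_\bullet\Aut(V)$ to $\Aut(V)$ with its strict monoid structure. Hence the map of reduced Segal objects corresponds to the $A_\infty$-map $\Aut(V)\simeq\Omega B\Aut(V)\to\Omega BK_\C\simeq K_\C$ obtained by looping its realization, which by the previous step is precisely the loop of the index map, as required.

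The main obstacle will be the first, rectification step: making mathematically precise that the pseudo-simplicial datum $(\Lc,\alpha)$ induces a genuine map in the $\infty$-category of simplicial spaces. While standard, the cleanest implementation is to sidestep rectification by observing, as the paper does preceding \eqref{eqn:segal}, that $N_\bullet\elTate(\C)^{\grp}\simeq \bigsqcup_V B_\bullet\Aut(V)$ as simplicial $\infty$-groupoids; this reduces the theorem to the content of Proposition \ref{prop} together with the universal property of the Segal model, neither of which requires further combinatorial manipulation of $\alpha$.
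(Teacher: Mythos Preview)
Your outline captures the overall architecture of the paper's argument, and your third step (invoking the equivalence between reduced group-like Segal objects and connected pointed spaces, as in \cite[5.2.6.15]{Lurie:ha}) matches the paper exactly. However, there is a genuine gap in your second step.

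Proposition~\ref{prop} compares the composite $\Gr^{\le}_\bullet(\C)^\times\to\Gr_\bullet(\C)^\times\to K_{S_\bullet(\C)}$ with $\Index$. Your Segal map, by contrast, is the composite $B_\bullet\Aut(V)\to^{\Lc}\Gr_\bullet(\C)^\times\to K_{S_\bullet(\C)}$. To bridge these you need to know that, after geometric realization, $|\Lc|\colon B\Aut(V)\to|\Gr_\bullet(\C)^\times|$ agrees with the canonical map $B\Aut(V)\to|\elTate(\C)^\times|$ followed by a homotopy inverse of the forgetful equivalence $|\Gr_\bullet(\C)^\times|\to|\elTate(\C)^\times|$. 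You have not argued this; the commuting triangle you draw presupposes it. The ``sidestep'' via the decomposition $N_\bullet\elTate(\C)^\times\simeq\bigsqcup_V B_\bullet\Aut(V)$ does not help: that equivalence concerns only the source of $\Lc$ and says nothing about how $|\Lc|$ interacts with the forgetful map.

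This is precisely the point where the paper does real work. It first shows $|\Gr_\bullet(\C)^\times|\simeq|\elTate(\C)^\times|$ (using $0$-coskeletality of $\Gr_\bullet(V)$), and then, to show $|\Lc|$ is inverse to this equivalence, it extends $\Lc$ to a pseudo-simplicial map out of the complete Segal space $B^{css}_\bullet\elTate(\C)^\times$, checks by inspection that the composite $p_2^\ast N_\bullet\elTate(\C)^\times\to B^{css}_\bullet\elTate(\C)^\times\to^{\Lc}\Gr_\bullet(\C)^\times\to\elTate(\C)^\times$ is the identity, and finally invokes the Joyal--Tierney comparison (the Appendix, Lemma~\ref{lemma:key} and Corollary~\ref{cor:key}) to pass from the $p_2^\ast$-inclusion to the $p_1^\ast$-inclusion. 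Only then does Proposition~\ref{prop} yield the identification of the realization with the index map. Your proposal needs an argument filling this role.
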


\begin{proof}
We have a morphism of simplicial objects $B_{\bullet}\Aut(V) \to \Gr_{\bullet}(\C)^{\times} \to K_{S_{\bullet}\C}$. We claim that the forgetful map $\Gr_\bullet(\C)^\times\to \elTate(\C)^\times$ is an equivalence after geometrically realizing. Indeed, by its definition as a Grothendieck construction, we have an equivalence of spaces
\[
    |\Gr_\bullet(\C)^\times|\simeq \colim_{\elTate(\C)^\times} |\Gr_\bullet(V)|
\]
where the colimit on the right hand side is the colimit in the $\infty$-category of spaces of the functor
\begin{align*}
    \Gr_\bullet(-)\colon\elTate(\C)^\times&\to\sSet\\
    V&\mapsto \Gr_\bullet(V).
\end{align*}
Let $\{\bullet\}$ denote the constant diagram
\begin{align*}
    \{\bullet\}\colon\elTate(\C)^\times&\to\sSet\\
    V&\mapsto \Delta^0
\end{align*}
and consider the map to the constant diagram $\Gr_\bullet(-)\to\{\bullet\}$. After geometrically realizing, this gives a pointwise equivalence of diagrams; indeed, for any $V\in\elTate(\C)^\times$, the simplicial set $\Gr_\bullet(V)$ is 0-coskeletal, which implies that the map $\Gr_\bullet(V)\to\Delta^0$ is a trivial fibration. Therefore
\begin{align*}
     |\Gr_\bullet(\C)^\times| &\simeq \colim_{\elTate(\C)^\times} |\Gr_\bullet(V)|\\
     &\simeq \colim_{\elTate(\C)^\times}\{\bullet\}\simeq |\elTate(\C)^\times|
\end{align*}
as claimed.

We now show that the geometric realization of the map $\Lc$ is homotopy inverse to this map. Denote by $B_\bullet^{css}\elTate(\C)^\times$ the complete Segal space associated to the groupoid $\elTate(\C)^\times$, i.e.
\[
    B_n^{css}\elTate(\C)^\times:=\Fun([n],\elTate(\C)^\times)^\times.
\]
Recall the adjunctions
\[
    p_j^\ast\colon\sSet\leftrightarrows\ssSet\colon \iota_j^\ast
\]
for $j=1,2$ (see Appendix, \S \ref{app}). Observe that the inclusion of horizontal and vertical 0-simplices give canonical maps
\[
    p_j^\ast N_\bullet\elTate(\C)^\times\to B_\bullet^{css}\elTate(\C)^\times.
\]
for $j=1,2$. For $j=1$, this is an equivalence of complete Segal spaces, by \cite[Theorem 4.11]{JT} (it is the co-unit for the Quillen equivalence $p_1^\ast\dashv \iota_1^\ast$, see Appendix, \S \ref{app}). By Lemma~\ref{lemma:key}, these two inclusions become equivalent after applying the functor
\[
    t_!\colon\ssSet\to\sSet
\]
(see Appendix, \S \ref{app}).  By \cite[Theorem 4.12]{JT}, $t_!$ is a Quillen equivalence from the model structure for complete Segal spaces to the model structure for quasi-categories. By Corollary~\ref{cor:key}, we conclude that the two inclusions, viewed as a zig-zag from $\elTate(\C)^\times$ to itself, are canonically equal to the identity.

The pseudo-simplicial map $\Lc$ extends (along the inclusion of vertical 0-simplices $N_\bullet\elTate(\C)^\times\to B_\bullet^{css}\elTate(\C)^\times$) to a pseudo-simplicial map of simplicial groupoids
\[
    B_\bullet^{css}\elTate(\C)^\times\to^{\Lc}\Gr_\bullet(\C)^\times
\]
where concretely, $\Lc$ is given on objects by the formula above. On morphisms, $\Lc$ is given by
\begin{align*}
    \Lc&((g_1,\ldots,g_n)\to^{(h_0,\ldots,h_n)} (h_1g_1h_0^{-1},\ldots,h_ng_nh_{n-1}^{-1}))\\
    &=(L,g_1L,\ldots,g_n\cdots g_1L)\to^{(1,h_1g_1h_0^{-1}g_1^{-1},\ldots,h_n(g_n\cdots g_1)h_0^{-1}(g_n\cdots g_1)^{-1})} (L,h_1g_1h_0^{-1}L,\ldots,h_ng_n\cdots g_1h_0^{-1}L).
\end{align*}
One can check that $\alpha$ as above defines a natural transformation $\alpha\colon d_0\Lc\to\Lc d_0$. By inspection, the composition
\[
    p_2^\ast N_\bullet\elTate(\C)^\times\to B_\bullet^{css}\elTate(\C)^\times\to^{\Lc}\Gr_\bullet(\C)^\times\to \elTate(\C)^\times
\]
is the identity. By the above, the maps
\[
    p_j^\ast N_\bullet\elTate(\C)^\times\to B_\bullet^{css}\elTate(\C)^\times\to^{\Lc}\Gr_\bullet(\C)^\times
\]
are canonically equivalent for $j=1,2$; in particular, the map
\[
    \Lc\colon N_\bullet\elTate(\C)^\times\to \Gr_\bullet(\C)^\times
\]
is canonically inverse to the equivalence
\[
    \Gr_\bullet(\C)^\times\to\elTate(\C)^\times
\]
as claimed.

According to Proposition \ref{prop}, the geometric realization of the chain of maps
\[
    N_\bullet\Aut(V)\to^{\Lc}\Gr_\bullet(\C)^\times\to K_{S_\bullet(\C)}
\]
is therefore equivalent to the index map
$$B\Aut(V) \to^{\Index} BK_{\C}.$$
Theorem \cite[5.2.6.15]{Lurie:ha} implies that geometric realization induces an equivalence between the $\infty$-category of Segal objects $X_{\bullet}$ with $X_0$ contractible, and the $\infty$-category of connected pointed spaces. This shows that the $A_{\infty}$-structure we defined above, agrees with the one which naturally lives on the index map.
\end{proof}

\section{Appendix}\label{app}
In this appendix, we recall basic facts about complete Segal spaces and groupoids.\medskip

Let $\C$ be a category. Let $B_\bullet^{css}\C$ be the associated complete Segal space, i.e.
\begin{equation*}
    B_n^{css}\C=|\Fun([n],\C)^\times|.
\end{equation*}
For definiteness of notation, we view a complete Segal space as a bisimplicial set, with the simplicial direction horizontally, and the spaces given by the columns, e.g.
\begin{equation*}
    (B_\bullet^{css}\C)_{m,n}:=N_n \Fun([m],\C)^\times.
\end{equation*}
Recall the Quillen equivalence of Joyal-Tierney \cite[Section 2 and Theorem 4.12]{JT}
\[
    t_!\colon\ssSet\leftrightarrows \sSet\colon t^!
\]
from the Rezk model structure (for complete Segal spaces) on $\ssSet$ to the Joyal model structure (for quasi-categories) on $\sSet$. By definition,
\[
    t_!([m]\times[n]):=\Delta^m\times\Delta'[n]
\]
where $\Delta'[n]$ denotes the nerve of the groupoid freely generated by the category $[n]$. In general, $t_!$ is the left Kan extension of $t_!$ along the Yoneda embedding, while $t^!$ is the functor
\[
    (t^!X)_{m,n}:=\hom_{\sSet}(\Delta^m\times\Delta'[n],X).
\]
Recall also the projections and inclusions
\[
    \iota_j\colon \Delta\to\Delta\times\Delta\colon p_j
\]
where $\Delta$ is the ordinal category, and $j=1,2$.  We denote the associated functors
\[
    p_j^\ast \colon \sSet\to \ssSet \colon \iota_j^\ast.
\]
Then $p_j^*\dashv \iota_j^*$ for $j=1,2$. By \cite[Theorem 4.11]{JT}, $p_1^\ast\dashv\iota_1^\ast$ is also a Quillen equivalence from the Rezk model structure (for complete Segal spaces) on $\ssSet$ to the Joyal model structure (for quasi-categories) on $\sSet$.

\begin{lemma}
    For a category $\C$, with nerve $N\C$, there is a natural isomorphism of bisimplicial sets \[B_\bullet^{css}\C\cong t^! N\C.\]
\end{lemma}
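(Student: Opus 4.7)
The plan is to verify the claimed isomorphism of bisimplicial sets by providing a natural bijection on $(m,n)$-bisimplices. By definition
\begin{align*}
(B_\bullet^{css}\C)_{m,n} &= N_n\Fun([m],\C)^\times, \\
(t^! N\C)_{m,n} &= \hom_{\sSet}(\Delta^m\times\Delta'[n], N\C).
\end{align*}
I will identify both sides with $\Fun([m]\times G([n]),\C)$, where $G([n])$ denotes the groupoid freely generated by the category $[n]$ (so that $\Delta'[n] = N G([n])$ by definition).

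For the left-hand side, an $n$-simplex of $N\Fun([m],\C)^\times$ is a functor $[n]\to\Fun([m],\C)$ whose image lies in the maximal subgroupoid. By the exponential adjunction $(-)\times[m]\dashv\Fun([m],-)$ in $\Cat$, such a functor corresponds to a functor $[n]\times[m]\to\C$ that sends every morphism of the form $(f,\id_k)$, with $f$ a morphism of $[n]$, to an isomorphism in $\C$. The universal property of the free groupoid, applied in the first factor, turns this into a bijection with $\Fun(G([n])\times[m],\C)$, since $G([n])\times[m]$ is the localization of $[n]\times[m]$ at precisely those morphisms.

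For the right-hand side, the nerve functor $N\colon\Cat\to\sSet$ is fully faithful and preserves finite products, so $\Delta^m\times\Delta'[n] \cong N([m]\times G([n]))$, and consequently
\[
\hom_{\sSet}(\Delta^m\times\Delta'[n], N\C) = \Fun([m]\times G([n]),\C).
\]
Composing the two bijections gives the asserted isomorphism on $(m,n)$-bisimplices.

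It remains to check naturality in both simplicial directions. Naturality in $[m]$ is immediate from the functoriality of the hom-set description. Naturality in $[n]$ reduces to the tautology that any simplicial operator $[n']\to[n]$ induces a functor $G([n'])\to G([n])$ whose nerve is precisely the prescribed map $\Delta'[n']\to\Delta'[n]$. There is no genuine obstacle here beyond keeping the two simplicial directions straight and verifying that the convention for $\Delta'[n]$ used in the definition of $t^!$ coincides with $N G([n])$, which it does by the very definition $t_!([m]\times[n]):=\Delta^m\times\Delta'[n]$ recalled in the appendix. The naturality in $\C$ is manifest from both constructions.
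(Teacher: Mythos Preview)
Your proof is correct and follows essentially the same approach as the paper's. The paper compresses your steps (2)--(4) into the single identification $N_n\Fun([m],\C)^\times = \ob\Fun([m]\times\Delta'[n],\C)$ (using $\Delta'[n]$ to denote the free groupoid itself), and then invokes exactly the same fact you use---that the nerve is fully faithful and product-preserving---to pass to $\hom_{\sSet}(\Delta^m\times\Delta'[n],N\C)$; your version simply makes the role of the free-groupoid universal property explicit.
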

\begin{proof}
    By definition,
    \begin{align*}
        (B^{css}_\bullet\C)_{m,n}&:=N_n\Fun([m],\C)^\times\\
        &=\ob\Fun([m]\times\Delta'[n],\C).\intertext{Further, because the nerve preserves products and gives a fully faithful embedding of the category of categories into the category of simplicial sets, the righthand side is naturally isomorphic to}
        &\cong\hom_{\sSet}(\Delta^m\times\Delta'[n],N\C)=(t^!N\C).
    \end{align*}
\end{proof}

\begin{lemma}\label{lemma:pre}
    For a category $\C$, with core $\C^\times$, there exist natural isomorphisms
    \begin{align*}
        N\C        &\cong \iota_1^\ast t^! N\C\text{,}\\
        N\C^\times &\cong \iota_2^\ast t^! N\C\text{,}\\
        N\C^\times &\cong t_! p_2^\ast N\C^\times\text{.}
    \end{align*}
\end{lemma}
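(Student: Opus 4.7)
The plan is to prove (1) and (2) by direct unwinding of the explicit formula for $B^{css}_\bullet\C$ from the preceding lemma, and then to deduce (3) via a colimit argument using the universal property of the free groupoid functor.

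For (1) and (2), I would use both the identification $t^! N\C \cong B^{css}_\bullet\C$ from the preceding lemma and the defining formula $(B^{css}_\bullet\C)_{m,n} = N_n\Fun([m],\C)^\times$. Then $(\iota_1^\ast t^! N\C)_m = (B^{css}_\bullet\C)_{m,0} = N_0\Fun([m],\C)^\times = \ob\Fun([m],\C) = (N\C)_m$, which proves (1). Similarly, $(\iota_2^\ast t^! N\C)_n = (B^{css}_\bullet\C)_{0,n} = N_n\Fun([0],\C)^\times = N_n(\C^\times) = (N\C^\times)_n$, proving (2). Both identifications are manifestly compatible with face and degeneracy maps.

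For (3), I would use that $t_!$ and $p_2^\ast$ are both left adjoints and hence preserve colimits. Writing $N\C^\times = \colim_{\Delta^n\to N\C^\times}\Delta^n$, and observing that $p_2^\ast\Delta^n = \Delta^{0,n}$ (the bisimplicial representable at $([0],[n])$), the defining property $t_!(\Delta^{m,n}) = \Delta^m\times\Delta'[n]$ gives
\[
    t_! p_2^\ast N\C^\times \;\cong\; \colim_{\Delta^n\to N\C^\times}\Delta'[n].
\]
The crucial point is that, because $\C^\times$ is a groupoid, the adjunction between the inclusion $\Gpd\hookrightarrow\Cat$ and the free-groupoid functor $[n]\mapsto [n]_{\mathrm{grpd}}$ gives a bijection $\hom_{\Cat}([n],\C^\times) \cong \hom_{\Gpd}([n]_{\mathrm{grpd}},\C^\times)$. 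Hence every simplex $\sigma\colon\Delta^n\to N\C^\times$ has a canonical lift $\tilde\sigma\colon\Delta'[n]\to N\C^\times$, and these lifts are natural in $\sigma$. They assemble into a map $t_! p_2^\ast N\C^\times \to N\C^\times$, with a candidate inverse given by the natural transformation $\Delta^n\to\Delta'[n]$ (the nerve of $[n]\to [n]_{\mathrm{grpd}}$), which yields $N\C^\times = \colim\Delta^n \to \colim\Delta'[n] = t_! p_2^\ast N\C^\times$.

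The main step to verify is that these maps are mutually inverse. One direction is easy: on a simplex $\sigma$, the composition $N\C^\times\to t_! p_2^\ast N\C^\times\to N\C^\times$ restricts to $\Delta^n\to\Delta'[n]\stackrel{\tilde\sigma}{\to}N\C^\times$, which equals $\sigma$ since $\tilde\sigma|_{\Delta^n}=\sigma$ by construction of the extension. The other direction requires a small bookkeeping argument using that the extension $\sigma\mapsto\tilde\sigma$ is natural with respect to the face and degeneracy maps in $\Delta/N\C^\times$, which in turn follows from functoriality of $[-]_{\mathrm{grpd}}$; this will be the only nontrivial part of the proof.
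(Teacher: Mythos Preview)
For (1) and (2), your argument is essentially identical to the paper's: both unwind the description of $t^!N\C$ level by level. The only cosmetic difference is that you route through the identification $t^!N\C\cong B^{css}_\bullet\C$ from the preceding lemma, while the paper writes out $\hom_{\sSet}(\Delta'[n],N\C)$ directly; the computations coincide.

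For (3), your approach genuinely differs from the paper's. The paper observes that $t_!p_2^\ast$ is left adjoint to $\iota_2^\ast t^!$, and since (2) shows that the latter restricts to the identity on nerves of groupoids, uniqueness of left adjoints forces $t_!p_2^\ast$ to restrict to the identity there as well---a one-line argument with no combinatorics. Your colimit approach is more explicit and correct in outline, but the step you call ``bookkeeping'' is where the actual content lives, and it is not merely a consequence of naturality of $\sigma\mapsto\tilde\sigma$. Naturality is precisely what makes the cocone $\{\tilde\sigma\}$ well-defined and yields the easy composite $e\circ j=\id$. For the reverse composite you must show that $j$ is \emph{surjective}, i.e.\ that every class $[\alpha]_\sigma\in\colim_{\Delta^n\to N\C^\times}\Delta'[n]$, with $\alpha\colon[k]\to[n]_{\mathrm{grpd}}$ possibly non-monotone (e.g.\ $(1,0)\in\Delta'[1]_1$), is identified in the colimit with some $[\iota_k]_\tau$ coming from the inclusion $\Delta^k\hookrightarrow\Delta'[k]$. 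Producing the required zig-zags in $\Delta/N\C^\times$ is doable but is a genuine argument, not a formality. The adjoint argument is what buys you a clean proof here; if you want to keep your explicit construction of the maps, you can still invoke uniqueness of adjoints to certify that they are mutually inverse.
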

\begin{proof}
    The first statement is immediate from the definitions, and in fact holds for any simplicial set $X$. For the second, by definition,
    \begin{align*}
        (\iota_2^\ast t^! N\C)_n&=\hom_{\sSet}(\Delta[0]\times\Delta'[n],N\C)\\
        &\cong\hom_{\sSet}(\Delta'[n],N\C)\\
        &\cong\hom_{\sSet}(\Delta'[n],N\C^\times)\\
        &\cong \hom_{\sSet}(\Delta^n,N\C^\times)=N_n\C^\times.
    \end{align*}
    The second claim follows from the first by the uniqueness of adjoints. Concretely, we restrict the adjunction
    \[
        t_!p_2^\ast\dashv \iota_2^\ast t^!
    \]
    to the full subcategories of (nerves of) groupoids in $\sSet$ and (Rezk nerves of) groupoids in $\ssSet$. Then the above shows that after restricting to groupoids, $\iota_2^\ast t^!\cong 1$; therefore, the left adjoints, i.e. $t_!p_2^\ast$ and $1$ are also isomorphic.
\end{proof}

Let $\varepsilon_t\colon t_!t^!\Rightarrow 1$ denote the co-unit of the adjunction $t_!\dashv t^!$. For a bisimplicial set $X_{\bullet,\bullet}$, let $\varepsilon_2\colon p_2^\ast\iota_2^\ast X\into X$ denote the inclusion of horizontal 0-simplices, i.e. the co-unit of the adjunction $p_2^\ast \dashv \iota_2^\ast$.
\begin{lemma}\label{lemma:key}
    Let $\mathcal{G}$ be a groupoid. Then the compositions
    \[
        N\mathcal{G}\to^\cong t_!p_2^\ast N\mathcal{G}\to^\cong t_!p_2^\ast\iota_2^\ast t^!N\mathcal{G}\to^{t_!\varepsilon_2} t_!t^! N\mathcal{G}=t_! B_\bullet^{css}\mathcal{G}\to^{\varepsilon_t} N\mathcal{G}
    \]
    and
    \[
        N\mathcal{G}\to^\cong t_!p_1^\ast N\mathcal{G}\to^\cong t_! p_1^\ast\iota_1^\ast t^!N\mathcal{G}\to^{t_!\varepsilon_1} t_!t^!N\mathcal{G}=t_!B_\bullet^{css}\mathcal{G}\to^{\varepsilon_t}N\mathcal{G}
    \]
    are the identity. In particular, the two maps
    \[
         N\mathcal{G}\to^\cong t_!p_j^\ast N\mathcal{G}\to^\cong t_!p_j^\ast\iota_j^\ast t^!N\mathcal{G}\to^{t_!\varepsilon_j} t_!t^! N\mathcal{G}
    \]
    for $j=1,2$ are canonically equivalent.
\end{lemma}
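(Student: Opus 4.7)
The plan is to derive both compositions from a single instance of the triangle identity applied to the composite adjunction $L_j := t_!p_j^\ast \dashv \iota_j^\ast t^! =: R_j$ for $j=1,2$. Writing $\eta^{(j)}$ and $\varepsilon^{(j)}$ for the unit and co-unit of this composite adjunction, the standard formula for the co-unit of a composition gives $\varepsilon^{(j)} = \varepsilon_t \circ t_!\bigl((\varepsilon_j)_{t^!(-)}\bigr)$. Unpacking the diagram in the statement of the lemma, I would therefore identify the displayed composition with
\[
\varepsilon^{(j)}_{N\mathcal{G}}\circ L_j(u_j)\circ a_j,
\]
where $u_j : N\mathcal{G}\cong R_j N\mathcal{G}$ is the isomorphism supplied by Lemma~\ref{lemma:pre} (first item for $j=1$; second item combined with $\mathcal{G}=\mathcal{G}^\times$ for $j=2$), and $a_j : N\mathcal{G}\cong L_j N\mathcal{G}$ is the isomorphism supplied by the same lemma (trivially an identity for $j=1$ since $t_!p_1^\ast = 1_{\sSet}$; the third item for $j=2$).

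Next, I would identify $a_j$ in purely adjunction-theoretic terms. Inspecting the proof of Lemma~\ref{lemma:pre}, the isomorphism $a_j$ is obtained by ``uniqueness of adjoints'': it is the unique natural isomorphism $1 \to L_j$ (on $\sSet$ for $j=1$; on the full subcategory of nerves of groupoids for $j=2$) whose adjoint transpose under $L_j \dashv R_j$ is $u_j$. Concretely, this means $u_j = R_j(a_j^{-1})\circ \eta^{(j)}_{N\mathcal{G}}$, or equivalently
\[
\eta^{(j)}_{N\mathcal{G}} \;=\; R_j(a_j)\circ u_j.
\]

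Plugging this identification into the triangle identity $\varepsilon^{(j)}_{L_j X}\circ L_j(\eta^{(j)}_X) = 1_{L_j X}$ at $X = N\mathcal{G}$ and using naturality of $\varepsilon^{(j)}$ to move $a_j$ across, I get
\[
1_{L_j N\mathcal{G}} \;=\; \varepsilon^{(j)}_{L_j N\mathcal{G}}\circ L_jR_j(a_j)\circ L_j(u_j) \;=\; a_j\circ \varepsilon^{(j)}_{N\mathcal{G}}\circ L_j(u_j).
\]
Rearranging gives $\varepsilon^{(j)}_{N\mathcal{G}}\circ L_j(u_j) = a_j^{-1}$, whence composing on the right with $a_j$ yields
\[
\varepsilon^{(j)}_{N\mathcal{G}}\circ L_j(u_j)\circ a_j \;=\; 1_{N\mathcal{G}},
\]
as required. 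The ``in particular'' clause is then immediate, since both compositions ($j=1,2$) coincide with $1_{N\mathcal{G}}$.

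The main obstacle is the step of confirming that the $a_j$ constructed in Lemma~\ref{lemma:pre} really is the ``uniqueness-of-adjoints'' transpose of $u_j$ (rather than some other a priori available natural isomorphism). For $j=1$ this is trivial since $L_1$ and $R_1$ are strictly the identity on $\sSet$; for $j=2$ it requires verifying that the restriction of $L_2 \dashv R_2$ to nerves of groupoids is an adjoint equivalence, whose unit and co-unit are exactly $u_2$ and $a_2^{-1}$. Once this bookkeeping is done, the triangle identity does all of the work.
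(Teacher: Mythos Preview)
Your argument is correct and is essentially the same as the paper's, just repackaged one level of abstraction higher: the paper transposes the composition under $t_!\dashv t^!$ alone and checks directly (using Lemma~\ref{lemma:pre}) that the resulting map $p_j^\ast N\mathcal{G}\to t^!N\mathcal{G}$ is the inclusion of horizontal (resp.\ vertical) $0$-simplices, whereas you bundle this into the triangle identity for the composite adjunction $t_!p_j^\ast\dashv\iota_j^\ast t^!$. The ``obstacle'' you flag---that $a_j$ must be the mate of $u_j$ under uniqueness of adjoints---is exactly the content of the paper's appeal to Lemma~\ref{lemma:pre} (for $j=2$) and ``by inspection'' (for $j=1$).
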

\begin{proof}
    For the first, by the adjunction $t_!\dashv t^!$, it suffices to prove that
    \[
        p_2^\ast N\mathcal{G} \to^\cong  p_2^\ast \iota_2^\ast t^!N\mathcal{G}\to^{\varepsilon_{2,t^!}} t^!N\mathcal{G}\to^1 t^!N\mathcal{G}
    \]
    is the inclusion of horizontal 0-simplices. But this follows immediately from Lemma \ref{lemma:pre}. Similarly, for the second, it suffices to prove that
    \[
        p_1^\ast N\mathcal{G}\to^\cong p_1^\ast \iota_1^\ast t^!N\mathcal{G}\to^{\varepsilon_!} t^!N\mathcal{G}\to^1 t^!N\mathcal{G}
    \]
    is the inclusion of vertical 0-simplices. But this follows by inspection. For the last claim, the two maps are each (strict) inverses of the weak equivalence $\varepsilon_t$; the claim follows.
\end{proof}

\begin{corollary}\label{cor:key}
    Let $\mathcal{G}$ be a groupoid. Then $t_!$ takes the zig-zag of weak equivalences
    \[
        p_2^\ast N\mathcal{G}\to t^!N\mathcal{G} \leftarrow p_1^\ast N\mathcal{G}
    \]
    to the identity.
\end{corollary}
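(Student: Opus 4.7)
The plan is to deduce the corollary directly from Lemma~\ref{lemma:key}, together with the canonical identifications $N\mathcal{G}\cong t_!p_j^\ast N\mathcal{G}$ coming from Lemma~\ref{lemma:pre}. First I would apply $t_!$ to the zig-zag to obtain
\[
t_!p_2^\ast N\mathcal{G}\to t_!t^!N\mathcal{G}\leftarrow t_!p_1^\ast N\mathcal{G},
\]
and denote the isomorphisms of Lemma~\ref{lemma:pre} by $\alpha_j\colon N\mathcal{G}\xrightarrow{\cong}t_!p_j^\ast N\mathcal{G}$ and the two legs of the zig-zag by $f_j\colon t_!p_j^\ast N\mathcal{G}\to t_!t^!N\mathcal{G}$. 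Both $f_j$ are weak equivalences in the Joyal model structure, since $\varepsilon_t$ is a weak equivalence (as the counit of the Quillen equivalence $t_!\dashv t^!$).

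The heart of the argument is then the pair of strict identities $\varepsilon_t\circ f_j\circ\alpha_j=\id_{N\mathcal{G}}$ furnished by Lemma~\ref{lemma:key}. Because $\varepsilon_t$ is a weak equivalence, this says that in the homotopy category of quasi-categories each composite $f_j\circ\alpha_j$ is a canonical section-up-to-homotopy of $\varepsilon_t$, hence determined uniquely up to canonical equivalence. In particular the two morphisms $f_1\circ\alpha_1,f_2\circ\alpha_2\colon N\mathcal{G}\to t_!t^!N\mathcal{G}$ are canonically equivalent, so that transporting the zig-zag through the $\alpha_j$ and inverting the weak equivalence $f_1\circ\alpha_1$ against $f_2\circ\alpha_2$ produces the identity on $N\mathcal{G}$. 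This is exactly the assertion of the corollary.

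The only point that requires care is verifying that the isomorphisms $\alpha_j$ appearing here really are the same as the ones implicit in the statement of Lemma~\ref{lemma:key}; this is immediate by inspection, since the proof of Lemma~\ref{lemma:pre} produces $\alpha_j$ via exactly the unit/counit data that Lemma~\ref{lemma:key} feeds into its composites. Once this bookkeeping is made explicit, the corollary follows formally, and I do not anticipate any further obstacle.
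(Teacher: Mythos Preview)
Your proposal is correct and follows essentially the same route as the paper: both arguments invoke Lemma~\ref{lemma:key} to see that, after applying $t_!$ and the canonical identifications, each leg of the zig-zag becomes a homotopy inverse to $\varepsilon_t$, and hence the resulting span is the identity. One minor bookkeeping point: Lemma~\ref{lemma:pre} only states the isomorphism $N\mathcal{G}\cong t_!p_2^\ast N\mathcal{G}$ explicitly; the companion isomorphism $N\mathcal{G}\cong t_!p_1^\ast N\mathcal{G}$ (your $\alpha_1$) is the elementary fact that $t_!p_1^\ast\cong\id_{\sSet}$, which the paper also uses implicitly in Lemma~\ref{lemma:key}.
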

\begin{proof}
    By Lemma~\ref{lemma:key}, $t_!$ applied to both maps gives $\varepsilon_t^{-1}$.  This is equivalent to the identity via the map of spans
    \[
        \xymatrix{
           N\mathcal{G} \ar[r]^{\varepsilon_t^{-1}} \ar[d]^1 & t_!t^! N\mathcal{G} \ar[d]^{\varepsilon_t} & N\mathcal{G} \ar[l]_{\varepsilon_t^{-1}} \ar[d]^1\\
           N\mathcal{G} \ar[r]^1 & N\mathcal{G} & N\mathcal{G}. \ar[l]_1
        }
    \]
\end{proof}
\bibliographystyle{amsalpha}
\bibliography{master}
\end{document}